\def\cY{\overline{Y}}
\def\inv{^{-1}}
\def\cO{\mathcal{O}}
\newcommand{\bQ}{\ensuremath{\mathbb{Q}}}
\newcommand{\bR}{\ensuremath{\mathbb{R}}}
\newcommand{\bZ}{\ensuremath{\mathbb{Z}}}
\newcommand{\beq}{\begin{equation}\begin{aligned}}
\newcommand{\eeq}{\end{aligned}\end{equation}}
\newcommand{\case}[2][cccccccccccccccccccccccccccccccccccccccccc]{\left\{\begin{array}{#1}#2 \\ \end{array}\right.}
\newcommand{\Eq}[1]{\begin{align}#1\end{align}}
\newcommand{\Eqn}[1]{\begin{align*}#1\end{align*}}
\newtheorem{Thm}{Theorem}[section]
\newtheorem{Lem}[Thm]{Lemma}
\newtheorem{Prop}[Thm]{Proposition}
\newtheorem{Cor}[Thm]{Corollary}
\newtheorem{Rem}[Thm]{Remark}
\newtheorem{Def}[Thm]{Definition}
\newtheorem{Ex}[Thm]{Example}
\newtheoremstyle{named}{}{}{\itshape}{}{\bfseries}{.}{.5em}{#1 #3}
\theoremstyle{named}
\def\R{\mathbb{R}}
\def\C{\mathbb{C}}
\def\cO{\mathcal{O}}
\def\a{\alpha}
\def\e{\epsilon}
\def\=>{\Longrightarrow}
\def\o+{\oplus}
\def\bo+{\bigoplus}
\def\<{\langle}
\def\>{\rangle}
\def\oo{\infty}
\def\^{\wedge}
\def\+{\dagger}
\def\inv{^{-1}}
\def\dd[#1,#2]{\frac{d#1}{d#2}}
\def\del[#1,#2]{\frac{\partial #1}{\partial #2}}
\def\over[#1]{\overline{#1}}
\def\vec[#1]{\overrightarrow{#1}}
\def\tab{\;\;\;\;\;\;}
\newcommand{\til}[1]{\widetilde{#1}}
\newcommand{\veca}[1][cccccccccccccccccccccccccccccccccccccccccc]{\begin{pmatrix}#1 \end{pmatrix}}
\newcommand{\Li}{\mathrm{Li}}
\newcommand{\newg}{\mathsf{g}}
\begin{document}

\title{Quantum Dilogarithm Identities at Root of Unity}
\author{Ivan Chi-Ho Ip and Masahito Yamazaki}
\address{Kavli IPMU (WPI), University of Tokyo, Chiba 277-8583, Japan}
\email{ivan.ip@ipmu.jp\\masahito.yamazaki@ipmu.jp}
\dedicatory{Dedicated to the memory of Kentaro Nagao}
\subjclass[2010]{13F60}

\begin{abstract}
We study the root of unity degeneration of cluster algebras and quantum dilogarithm identities.
We prove identities for the cyclic dilogarithm
associated with a mutation sequence of a quiver, and as a consequence new identities for the non-compact quantum dilogarithm at $b=1$.
\end{abstract}

\maketitle
\tableofcontents

\section{Introduction}\label{sec.intro}

Quantum dilogarithm identities \cite{ReinekePoisson,KellerOn,KashaevN} are remarkable identities satisfied by the quantum dilogarithm function. 
They are associated with a mutation sequence of a quiver, and
fit nicely with the framework of the cluster algebras 
\cite{FominZelevinsky_1,FominZelevinsky_4} and their quantization \cite{FG_Dilogarithm,FG_Ensembles}.
The simplest case of the quantum dilogarithm identity, associated with the $A_2$ quiver,
is the celebrated pentagon identity \cite{Faddeev:1993rs}. More
general quiver mutations can generate identities of increasing complexity.
These general quantum dilogarithm identities have
applications to a wide-ranging topics in mathematics and physics---the literature is too vast to be exhausted here (see e.g. references in \cite{KashaevN}), but 
to name a few includes
Donaldson-Thomas theory \cite{Kontsevich:2008fj,Nagao_dilog,KellerOn},
integrable lattice models \cite{Bazhanov:2007mh,Terashima:2012cx,Yamazaki:2012cp},
3d hyperbolic geometry \cite{Nagao:2011aa,HikamiInoue},
4d \cite{Gaiotto:2010be,Cecotti:2010fi,Xie:2012gd} and 3d \cite{Terashima:2013fg} supersymmetric gauge theories.

Quantum cluster algebra has a one deformation parameter $q$.
In this paper we analyze the case where
$q$ is a root of unity.
In this limit, the compact quantum dilogarithm function $\Psi_q(x)$ degenerates into a 
cyclic dilogarithm, and we obtain 
identities for the cyclic dilogarithm, again associated with a 
mutation sequence of a quiver.

It is known \cite{KashaevN} that identities for $\Psi_q(x)$ takes two different forms,
the tropical form (Theorem \ref{qdilog_tropical}) and the universal form (Theorem \ref{qdilog_id_universal}),
where the arguments are given respectively by the tropical quantum $y$-variables and 
the (full) quantum $y$-variables, respectively.
Our cyclic dilogarithm identities also can written in two different ways (Theorem \ref{cyclic_dilog_id} and Theorem \ref{cyclic_id_standard}),
but their arguments are given by quantum $y$-variables as well as the cyclic $y$-variables (see Definition \ref{cyclic_y_def}), where the latter can be thought of as the $N$-th root of the 
quantum $y$-variable for the modular double.

We also study the root of unity limit for the non-compact quantum dilogarithm $\Phi_b(x)$, which is discussed in a recent preprint \cite{Garoufalidis:2014ifa}. Using the results from the cyclic dilogarithm identities, in Theorem \ref{g1_identity} and Theorem \ref{Main_2} we found a whole new family of identities for the non-compact quantum dilogarithm at $b=1$, corresponding to $q=-1$, which plays a very special role in the analysis. These identities are non-trivial since we are taking integral roots of non-commuting functions. The function $\Phi_1(x)$ may also provide new classical limits of the representation theory of the quantum plane $AB=q^2 BA$ \cite{Ip1}, which has only been previously studied in the $q\to 1$ limit with the quantum dilogarithm function appearing as the matrix coefficients.

Our identities is a vast generalization of the root-of-unity degeneration of the pentagon identity \cite{BazhanovR}, which has been utilized for example in
the study of 3-manifold invariants \cite{Kashaev_6j,Garoufalidis:2014ifa}
and solvable 3-dimensional lattice models \cite{Bazhanov:1992jq}.
We would like to urge the readers of this paper to find concrete applications of 
our results in their favorite mathematics/physics problems.

This paper is organized as follows. In Section \ref{sec.qdilog} we set up our conventions and 
summarize the quantum dilogarithm identities. We then consider the root of unity limit of the 
quantum dilogarithm identities and their consequences, first in Section \ref{sec.cyclic} for the compact dilogarithm $\Psi_q(x)$
and then for the non-compact quantum dilogarithm $\Phi_b(x)$ in Section \ref{sec.cyclic2}. In the Appendix we present a new proof of the root of unity limit of the non-compact dilogarithm function $\Phi_b(x)$.

\*{\textbf{Acknowledgments.}}
This work has grown up out of a over-the-tea conversation.
We would like to thank the Kavli IPMU tea time for inspiration, 
and the WPI program (MEXT, Japan) and Kavli foundation for 
generous support.

\section{Quantum Dilogarithm Identities}\label{sec.qdilog}

In this section we review the basic of classical/quantum cluster algebras and 
the quantum dilogarithm identities. Our presentation throughout follows closely \cite{KashaevN}.
We here take $q$ to be of a generic value.

\subsection{Cluster Algebras}

\begin{Def}[Mutation and Classical $y$-variables]
Let $I$ be a finite set. 
We consider a pair (initial seed) $(B,y)$ where $B=(b_{ij})_{i,j\in I}$ is a skew symmetric integer matrix, and $y=(y_i)_{i\in I}$ is an $I$-tuple of commutative variables. Let us define the mutation of $(B,y)$ at $k$ by $\mu_k(B, y)=(B', y')$,
where
\Eq{
b'_{ij}=\case{
-b_{ij} , &\mbox{$i=k$ {\rm or} $j=k,$} \\
\begin{array}{l}
b_{ij}+[-b_{ik}]_+b_{kj}+b_{ik}[b_{kj}]_+ \\
\quad =b_{ij}+[b_{ik}]_+b_{kj}+b_{ik}[-b_{kj}]_+ , 
\end{array}
&\mbox{\rm otherwise,}}
}
\Eq{
y'_i=\case{y_k\inv , &i=k,\\
\begin{array}{l}
y_iy_k^{[b_{ki}]_+}(1+y_k)^{-b_{ki}}   \\
\quad  =y_iy_k^{[-b_{ki}]_+}(1+y_k\inv)^{-b_{ki}} , 
\end{array}
&i\neq k,}
\label{classical_y}
}
with $[x]_{+}:={\rm max}(x,0)$.
Starting from the initial seed, we can repeat the mutations
to obtain a pair $(B', y')$.
We call them seeds of $(B,y)$.
Such $y'$s take values in the semifield
\begin{align}
  \bQ_+(y):= \{\mbox{rational function of $y$ with subtraction-free expression}\} , 
\end{align}
and are called (classical) $y$-variables. 
\end{Def}

\begin{Rem}
When $B$ correspond to the incident matrix of a quiver $Q$, this mutation will be a \emph{quiver mutation} of $Q$ at the vertex $i=k$, and we have $\mu_k^2={\rm id}$. Such a $Q$ is necessarily acyclic.
\end{Rem}

\begin{Def}[Tropicalization] 

Let us define the tropical semifield by
\begin{align}
{\rm Trop}(y):=\left\{\prod_{i\in I} y_i^{a_i}\Big|a_i\in \bZ\right\}
\end{align}
with tropical sum 
\begin{align}
\prod_{i\in I} y_i^{a_i}\oplus\prod_{i\in I}y_i^{b_i} := \prod_{i\in I} y_i^{\min(a_i,b_i)}.
\end{align}
Then there exists a canonical surjective semifield homomorphism 
\begin{align}
\pi_{\rm trop}:\bQ_+(y)\to {\rm Trop}(y),
\end{align} 
defined by
\begin{align}
y_i\mapsto y_i,\;\;\;\;\;\; c\in \bQ \mapsto 1.
\end{align}
\end{Def}

\begin{Def}[$c$-vector \cite{FominZelevinsky_4}]
The $c$-vector of a cluster $y$-variable $y'\in \bQ_{+}(y)$ is the vector $c=(c_i)_{i\in I} \in \bZ^{|I|}$ such that 
\begin{align}
\pi_{\rm trop}(y_i')=\prod_{j\in I}(y_j)^{c_j}.
\end{align}
\end{Def}

\begin{Def}[Tropical Sign]  For a cluster variable $y'\in \bQ_{+}(y)$, 
we set the tropical sign $\epsilon(y')=1$ (resp. $-1$) if the components of the $c$-vector for 
$y'$ are all non-negative (resp. all non-positive).
\end{Def}

\begin{Rem}
The tropical sign is always well-defined thanks to the sign coherence proven in \cite{DWZ2,NagaoCluster,PlamondonCluster}.
\end{Rem}

\subsection{Quantum $y$-variables}

Let us next discuss the quantum version of cluster $y$-variables.

\begin{Def}[Quantum $y$-variables \cite{FG_Dilogarithm,FG_Ensembles}]\label{quantum_y_def}
Consider a pair $(B,Y)$ 
where $B=(b_{ij})_{i,j\in I}$ is a skew symmetric integer matrix, and 
$Y=(Y_i)_{i\in I}$ is an $I$-tuple of non-commutative variables such that
\begin{align}
Y_iY_j = q^{2b_{ji}}Y_jY_i.
\label{quantum_y_comm}
\end{align} 
We define the quantum mutation of $(B,Y)$ at $k\in I$ by $\mu_k(B,Y):=(B',Y')$ where $b_{ij}$ mutates the same as before, and $Y'$ is given by
\Eq{Y_i'=\case{Y_k\inv,&i=k\\
\displaystyle Y_i\prod_{m=1}^{b_{ik}}(1+q^{2m-1}Y_k),& i\neq k, b_{ik}\ge 0\\
\displaystyle Y_i\prod_{m=1}^{|b_{ik}|}(1+q^{2m-1}Y_k\inv)\inv,&  i\neq k, b_{ik}<0.
}
\label{quantum_y_mutate}
}
Starting from the initial seed $(B,Y)$, we can repeat the mutations
to obtain a pair (seed) $(B', Y')$.
The variables $Y'_i$s obtained this way are called quantum $y$-variables.
\end{Def}

\begin{Rem}
The mutation rule for the quantum $y$-variable \eqref{quantum_y_mutate} reduces to the 
mutation rule for the classical $y$-variables \eqref{classical_y} in the limit $q\to 1$,
\end{Rem}

For our purposes it is useful to consider more generally non-commutative variables 
$Y_{\alpha}$ with $\alpha\in \mathbb{Z}^{|I|}$, satisfying the relation
\begin{align}
Y_{\alpha+\beta}=q^{\langle \alpha, \beta \rangle} Y_{\alpha} Y_{\beta}, \quad
\langle \alpha, \beta \rangle=- \langle \beta, \alpha \rangle
:=\alpha^T B \beta, 
\label{quantum_torus}
\end{align}
with the identification that $Y_i^{\pm1}=Y_{\pm e_i}$ for the unit vector $e_i \in\mathbb{Z}^{|I|}$.

\subsection{Identities for $\Psi_q(x)$ and $g_b(x)$}

\begin{Def}[Period]
For any $I$-sequence $\bm{k}=(k_1,..., k_L)$, $L>1$, we consider a sequence of mutations. We fix our initial seed $(B(1), Y(1)):=(B,Y)$, and set
\begin{align}
(B(t+1),Y(t+1)):=\mu_{k_t}(B(t),Y(t)).
\end{align}
Let $\sigma$ be a permutation of $I$, we call the sequence $\bm{k}$ of mutations a $\sigma$-period if
\Eq{b_{\sigma(i)\sigma(j)}(L+1)=b_{ij}(1),\;\;\; Y_{\sigma(i)}(L+1)=Y_i(1),\;\;\;i,j\in I.
\label{eq_period}}
\end{Def}

\begin{Rem}
We can give alternative definition of the $\sigma$-period by replacing the quantum $y$-variables $Y_i(t)$
by their classical counterparts $y_i(t)$ in the definition \eqref{eq_period},
where  $(B(1), y(1)):=(B,y)$ and
\begin{align}
(B(t+1),y(t+1)):=\mu_{k_t}(B(t), y(t)).
\end{align}
The two definitions of $\sigma$-periods coincide \cite{FG_Dilogarithm}.
\end{Rem}

\begin{Def}[Compact Quantum Dilogarithm Function \cite{Faddeev:1993rs,Faddeev:1993pe}]
We define the {\it (compact) quantum dilogarithm function} $\Psi_q(x)$ by\footnote{
In some literature $q^2$ is denoted by $q$.
In our notation we always have integer powers of $q$.
}
\begin{align}
\Psi_q(x):=\frac{1}{(-q x;q^2)_{\infty}} ,
\quad
(x;q)_{\infty}: = \prod_{k=0}^{\infty} (1-q^k x).
\end{align}
\end{Def}

\begin{Thm}[Quantum Dilogarithm Identity in Tropical Form \cite{ReinekePoisson,KellerOn}]\label{qdilog_tropical}

Consider a sequence of mutation labeled by an $I$-sequence $\bm{k}$, and suppose that $\bm{k}$ is a $\sigma$-period for a permutation $\sigma$.
Let $c_t$ ($t=1\ldots, L$) be the $c$-vector of the classical $y$-variables $y_{k_t}(t)$, and let us
denote their tropical signs by $\epsilon_t$ ($t=1\ldots, L$).
We then have
\Eq{\Psi_q(Y_{\epsilon_1 c_1})^{\epsilon_1} 
\cdots 
\Psi_q(Y_{\epsilon_L c_L} )^{\epsilon_L}=1.
\label{eq.qdilog_tropical}
}

\end{Thm}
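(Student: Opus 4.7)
The proof follows the strategy of Keller and Reineke, exploiting a factorization of the quantum $y$-mutation into a tropical (monomial) part and an adjoint action by the quantum dilogarithm. The key fact is that, for each sign $\epsilon \in \{+,-\}$, the quantum mutation admits a decomposition
\[
\mu_k = \mathrm{Ad}\bigl(\Psi_q(Y_k^{\epsilon})^{\epsilon}\bigr) \circ \tau_k^{(\epsilon)},
\]
where $\tau_k^{(\epsilon)}$ is an automorphism of the quantum torus \eqref{quantum_torus} given by a monomial formula depending on the sign $\epsilon$. Both choices of $\epsilon$ yield the same $\mu_k$, and this flexibility is essential: at each step $t = 1,\dots,L$ we will choose $\epsilon = \epsilon_t$ equal to the tropical sign of $y_{k_t}(t)$.

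Having made this choice of signs, I would write out the composition $\mu_{k_L} \circ \cdots \circ \mu_{k_1}$ as a product of $L$ adjoint actions interleaved with $L$ tropical factors, and then push all adjoint actions to the left using the identity $\tau \circ \mathrm{Ad}(X) = \mathrm{Ad}(\tau(X)) \circ \tau$. The outcome is
\[
\mu_{k_L}\circ\cdots\circ\mu_{k_1} \;=\; \mathrm{Ad}\!\left(\prod_{t=1}^{L} \Psi_q\bigl(Z_t\bigr)^{\epsilon_t}\right) \circ \tau_{k_L}^{(\epsilon_L)}\circ\cdots\circ \tau_{k_1}^{(\epsilon_1)},
\]
where $Z_t = \bigl(\tau_{k_L}^{(\epsilon_L)}\circ\cdots\circ\tau_{k_{t+1}}^{(\epsilon_{t+1})}\bigr)\bigl(Y_{k_t}(t)^{\epsilon_t}\bigr)$ is the argument of the $t$-th dilogarithm after being dragged past all the later tropical mutations.

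The next step, and the technical heart of the argument, is to identify $Z_t = Y_{\epsilon_t c_t}$. The point is that the sign coherence of $c$-vectors (used in the Remark above the statement) implies that, when $\epsilon_t$ is the tropical sign of $y_{k_t}(t)$, the full quantum $y$-variable $Y_{k_t}(t)^{\epsilon_t}$ already equals a monomial $Y_\alpha$ in the initial variables modulo lower order corrections that are killed by the remaining monomial transformations; equivalently, the tropical limit of $y_{k_t}(t)^{\epsilon_t}$ is the monomial with exponent vector $\epsilon_t c_t$, and conjugation through the $\tau^{(\epsilon_s)}$'s preserves this monomial character. Invoking the $\sigma$-period condition, the composition $\mu_{k_L}\circ\cdots\circ\mu_{k_1}$ coincides with the permutation $\sigma$ on quantum $y$-variables; the purely tropical part $\tau_{k_L}^{(\epsilon_L)}\circ\cdots\circ\tau_{k_1}^{(\epsilon_1)}$ also equals $\sigma$ (it is just the tropicalized period), and so the adjoint factor must act trivially.

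Triviality of $\mathrm{Ad}(\,\cdot\,)$ on the quantum torus implies that the product $\Psi_q(Y_{\epsilon_1 c_1})^{\epsilon_1}\cdots \Psi_q(Y_{\epsilon_L c_L})^{\epsilon_L}$ is central. Since the algebra has trivial center (or alternatively, by matching the constant term in the formal $q$-expansion after normalizing $\Psi_q(0)=1$), this constant must equal $1$, proving \eqref{eq.qdilog_tropical}. The main obstacle is the middle step: verifying that the iterated tropical transformations collapse each $Y_{k_t}(t)^{\epsilon_t}$ into the clean monomial $Y_{\epsilon_t c_t}$, which is where sign coherence is used in an essential way. The other steps are either formal (the decomposition and the Ad-shuffling) or follow from the definition of $\sigma$-period.
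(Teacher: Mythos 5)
The paper does not actually prove this theorem: it is quoted from \cite{ReinekePoisson,KellerOn}, and the only ingredient of your argument that appears in the text is the conjugation identity \eqref{shuffle_id}, which is precisely your claim that the iterated tropical transformations collapse the arguments to $Y_{\epsilon_t c_t}$ (stated there for an arbitrary sign sequence and used only to pass from the tropical to the universal form). So your sketch is a faithful reconstruction of the standard Keller/Kashaev--Nakanishi proof rather than an alternative to anything in the paper, and its architecture --- the decomposition $\mu_k=\mathrm{Ad}\bigl(\Psi_q(Y_k^{\epsilon})^{\epsilon}\bigr)\circ\tau_k^{(\epsilon)}$, the choice $\epsilon=\epsilon_t$ at step $t$, the Ad-shuffling, and the use of the $\sigma$-period to kill the tropical part --- is correct. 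One point worth making explicit: sign coherence gives you more than you use, namely that $\epsilon_t c_t$ has all entries non-negative, so each factor $\Psi_q(Y_{\epsilon_t c_t})^{\epsilon_t}$ is $1$ plus positive-degree monomials; this is what places the product in a completed quantum affine space where the final argument can be run at all.

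The one genuine soft spot is the last step. ``The algebra has trivial center'' is false in general: whenever $B$ is degenerate the quantum torus contains central monomials $Y_\alpha$ with $\alpha\in\ker B$, and a central element of the completion with constant term $1$ need not equal $1$ (consider $1+Y_\alpha$ for such an $\alpha$). Your fallback of matching constant terms does not repair this for the same reason. The standard fixes are either (i) to pass to principal coefficients, replacing $B$ by the non-degenerate $2n\times 2n$ matrix $\left(\begin{smallmatrix} B & -I \\ I & 0 \end{smallmatrix}\right)$, prove the identity there --- where $\mathrm{Ad}$ is faithful on the completed quantum affine space --- and then specialize the frozen variables; or (ii) to run a filtration/leading-term argument on $E-1$ directly. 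With that repair, and with the inductive verification of $Z_t=Y_{\epsilon_t c_t}$ written out (it is exactly \eqref{shuffle_id}), your outline becomes a complete proof.
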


We can convert this identity to another form, with the help of the following identity \cite{KashaevN}:
\begin{align}
Y_{k_t}(t) = \mathrm{Ad}
\left[
  \Psi_q(Y_{\epsilon_1 c_1})^{\epsilon_1} \cdots
  \Psi_q(Y_{\epsilon_{t-1} c_{t-1}})^{\epsilon_{t-1} }
\right]
\left( 
  Y_{c_t}
\right) ,
\label{shuffle_id}
\end{align}
which holds for an arbitrary sign sequence $(\epsilon_1, \ldots, \epsilon_t)$.
This identity means we can shuffle the position of $\Psi_q$ as 
\begin{align}
\begin{split}
&  \Psi_q(Y_{\epsilon_1 c_1})^{\epsilon_1} \cdots
  \Psi_q(Y_{\epsilon_{L-1} c_{L-1}})^{\epsilon_{L-1} }
  \Psi_q( Y_{\epsilon_L})^{\epsilon_L} \\
&\quad  =
  \Psi_q(Y_{k_L}(L)^{\epsilon_L c_L})^{\epsilon_L}    \Psi_q(Y_{\epsilon_1 c_1})^{\epsilon_1} \cdots
  \Psi_q(Y_{\epsilon_{L-1} c_{L-1}})^{\epsilon_{L-1} },
  \label{shuffle_demonstration}
  \end{split}
\end{align}
and by repeating this we arrive at the following yet another form of the quantum dilogarithm identity:
\begin{Thm}[Quantum Dilogarithm Identity in Universal Form] \label{qdilog_id_universal}
Under the assumptions of Theorem \ref{qdilog_tropical}, we have
\Eq{\Psi_q(Y_{k_L}(L)^{\epsilon_L})^{\epsilon_L}...\Psi_q(Y_{k_1}(1)^{\epsilon_1})^{\epsilon_1}=1.
}
\end{Thm}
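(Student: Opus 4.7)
The plan is to reduce Theorem \ref{qdilog_id_universal} to Theorem \ref{qdilog_tropical} by repeatedly applying the shuffling identity \eqref{shuffle_id} to move one factor at a time from the right end of the tropical product toward the left, replacing each tropical $y$-variable argument by the corresponding full quantum $y$-variable. The essential input is the analytic identity
\[
A\,\Psi_q(Y)\,A^{-1}=\Psi_q\!\bigl(\mathrm{Ad}[A](Y)\bigr),
\]
valid for any invertible $A$ and any argument $Y$ on which $\Psi_q$ makes sense, which follows because $\Psi_q$ is given by a power series in its argument and conjugation is an algebra homomorphism. Combined with the fact that $Y_{-c}=Y_c^{-1}$ in the quantum torus \eqref{quantum_torus}, this gives in particular
\[
A\,\Psi_q(Y_{\epsilon c})^{\epsilon}A^{-1}
=\Psi_q\!\bigl(\mathrm{Ad}[A](Y_{c})^{\epsilon}\bigr)^{\epsilon}.
\]

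Starting from Theorem \ref{qdilog_tropical}, I would argue by downward induction on $t=L,L-1,\dots,1$ that
\[
\Psi_q(Y_{k_L}(L)^{\epsilon_L})^{\epsilon_L}\cdots
\Psi_q(Y_{k_{t+1}}(t+1)^{\epsilon_{t+1}})^{\epsilon_{t+1}}
\,\Psi_q(Y_{\epsilon_1 c_1})^{\epsilon_1}\cdots
\Psi_q(Y_{\epsilon_t c_t})^{\epsilon_t}=1.
\]
The base case $t=L$ is exactly Theorem \ref{qdilog_tropical}. For the inductive step, set
\[
A_t:=\Psi_q(Y_{\epsilon_1 c_1})^{\epsilon_1}\cdots
     \Psi_q(Y_{\epsilon_{t-1} c_{t-1}})^{\epsilon_{t-1}},
\]
and apply \eqref{shuffle_id} with this particular choice of signs to get
$\mathrm{Ad}[A_t](Y_{c_t})=Y_{k_t}(t)$. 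Conjugating the rightmost factor then yields
\[
A_t\,\Psi_q(Y_{\epsilon_t c_t})^{\epsilon_t}
=\Psi_q\!\bigl(\mathrm{Ad}[A_t](Y_{c_t})^{\epsilon_t}\bigr)^{\epsilon_t}A_t
=\Psi_q(Y_{k_t}(t)^{\epsilon_t})^{\epsilon_t}\,A_t,
\]
which is exactly the shuffle displayed in \eqref{shuffle_demonstration}. Plugging this into the inductive hypothesis moves $\Psi_q(Y_{k_t}(t)^{\epsilon_t})^{\epsilon_t}$ to its correct position in front of the remaining tropical factors, giving the statement for $t-1$. When $t=1$, the tropical tail $A_1$ is empty and we obtain the universal identity asserted by the theorem.

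The only substantive point to verify is the validity of the shuffling identity \eqref{shuffle_id} for the \emph{specific} mixed sign sequence $(\epsilon_1,\dots,\epsilon_{t-1})$ dictated by tropical signs, but the statement recorded in the excerpt allows an arbitrary sign sequence, so the induction goes through unchanged. The computation is otherwise purely mechanical; no additional estimates or convergence considerations arise because everything takes place in the quantum torus algebra, and each conjugation step preserves that algebraic framework. I do not anticipate a genuine obstacle beyond bookkeeping—specifically, confirming that the order of shuffling is indeed right-to-left and that each $\mathrm{Ad}$-step uses the correct prefix $A_t$ rather than a longer or shorter one.
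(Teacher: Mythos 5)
Your proposal is correct and follows essentially the same route as the paper: the paper's own argument is precisely the repeated application of the shuffle identity \eqref{shuffle_id}, illustrated in \eqref{shuffle_demonstration}, to move the rightmost tropical factor to the front while converting its argument to the full quantum $y$-variable, "and by repeating this" one arrives at the universal form. Your explicit downward induction with the prefix $A_t$ just formalizes that repetition.
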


\begin{Ex}[$A_2$ quiver]\label{A2_example_qdilog}

Let us consider the $A_2$ quiver
\begin{align}
1\longleftarrow 2. 
\end{align}
We have $I=\{1,2\}$, $b_{ij}=\begin{pmatrix}0&-1\\1&0\end{pmatrix}$, $y=(y_1,y_2)$. 
Quantum $y$-variables $Y_1, Y_2$ satisfy $Y_1Y_2 = q^2 Y_2 Y_1$.
Let the $I$-sequence be $(1,2,1,2,1)$ with $L=5$. This is a $(12)$-period.
Indeed, we have
\begin{align}
\begin{array}{lll}Y(1) = (Y_1, Y_2) , &b(1) = \begin{pmatrix}0&-1\\1&0\end{pmatrix} ,\\
Y(2)=(Y_1\inv,Y_2(1+qY_1) ),&b(2) = \begin{pmatrix}0&1\\-1&0\end{pmatrix} , \\
Y(3)=(Y_1\inv(1+qY_2+Y_1Y_2), Y_2\inv(1+q\inv Y_1)\inv),&b(3) = \begin{pmatrix}0&-1\\1&0\end{pmatrix} , \\
Y(4)=(Y_1(1+\! qY_2+\! Y_1Y_2)\inv,q\inv Y_1\inv Y_2\inv(1+\!qY_2)),&b(4) = \begin{pmatrix}0&1\\-1&0\end{pmatrix} , \\
Y(5)=(Y_2\inv,q\inv Y_1Y_2(1+q\inv Y_2)\inv) , & b(5) = \begin{pmatrix}0&-1\\1&0\end{pmatrix} ,\\
Y(6)=(Y_2, Y_1) ,  &b(6) = \begin{pmatrix}0&1\\-1&0\end{pmatrix} ,\\
\end{array}
\end{align}
where the $c$-vectors $c_t:=c(y_{k_t}(t))$ and tropical signs $\e_t$ are computed to be
\begin{align}
\begin{array}{lll}
c_1=(1,0),&\epsilon_1=+, \\
c_2=(0,1),&\epsilon_2=+ , \\
c_3=(-1,0),&\epsilon_3=- ,\\
c_4=(-1,-1),&\epsilon_4=- ,\\
c_5=(0,-1),&\epsilon_5=- .\\
\end{array}
\end{align}
The quantum dilogarithm identity is, in the tropical form
\begin{align}
\Psi_q(Y_1)\Psi_q(Y_2)\Psi_q(Y_1)\inv \Psi_q(q\inv Y_1Y_2)\inv \Psi_q(Y_2)\inv=1,
\label{Psi_pentagon}
\end{align}
where we used $Y_{e_1+e_2} = q\inv Y_{e_1} Y_{e_2}=q\inv Y_1 Y_2$ (recall \eqref{quantum_torus}).
This is the famous {\it pentagon identity} \cite{Faddeev:1993rs} for $\Psi_q(x)$.
In the universal form, we have
\begin{align} 
\begin{split}
&\Psi_q(Y_2)\inv \Psi_q\left(q(1+qY_2)\inv Y_2Y_1\right)\inv  \\
& \quad \Psi_q\left((1+qY_2+Y_1Y_2)\inv Y_1\right)\inv \Psi_q(Y_2(1+qY_1))\Psi_q(Y_1)=1.
\end{split}
\end{align}

\end{Ex}

\begin{Ex}[$A_3$ quiver]\label{A3_example_qdilog}

Consider $A_3$ quiver, where we choose the orientation 
\begin{align}
1\longleftarrow 2 \longrightarrow 3
\end{align}
such that
\begin{equation}
  B=(b)_{ij}= \veca[0&-1&0\\1&0&1\\0&-1&0].
\end{equation}
and $y=(y_1,y_2,y_3)$. Quantum $y$-variables $Y_1, Y_2, Y_3$ satisfy
\Eq{Y_1Y_2 = q^2 Y_2 Y_1, \quad Y_3Y_2 = q^2 Y_2 Y_2, \quad Y_1 Y_3=Y_3 Y_1.}

Consider a mutation sequence $(1,3,2,1,3,2,1,3,2)$.
The quantum $y$-variables are then computed to be
\footnotesize
\begin{align}
\begin{split}
Y(1) &= (Y_1, Y_2, Y_3),\\
Y(2) &= (Y_1\inv, Y_2(1+q Y_1), Y_3),\\
Y(3)&= (Y_1\inv, Y_2 X_q, Y_3\inv),\\
Y(4)&=(Y_1\inv(1+q Y_2X_q), Y_2\inv X_{q\inv}\inv, Y_3\inv (1+q Y_2 X_q)),\\
Y(5)&=(Y_1(1+q\inv Y_2 X_q)\inv,q Y_2\inv Y_1\inv X_{q\inv}\inv (1+q\inv Y_1+q Y_2 X_q), Y_3(1+q\inv Y_2 X_q)\inv),\\
Y(6)&=(Y_1(1+q\inv Y_2 X_q)\inv,q^2 Y_2\inv Y_1\inv Y_3\inv X_{q\inv}\inv (1+q\inv Y_1+q^3 Y_2 X_q)(1+q\inv Y_3+q Y_2 X_q),\\
&\tab Y_3(1+q\inv Y_2 X_q)\inv)\\
&=(Y_1(1+q\inv Y_2 X_q)\inv, Y_1\inv Y_2\inv Y_3\inv (1+q^3 Y_2+q^2 Y_2 Y_3)(1+q Y_2+q^2 Y_2 Y_1), \\
&\tab Y_3(1+q\inv Y_2 X_q)\inv),\\
Y(7)&=(Y_1(1+q\inv Y_2 X_q)\inv(1+q Y_1\inv Y_2\inv Y_3\inv (1+q^3 Y_2+q^2 Y_2 Y_3)(1+q Y_2+q^2 Y_2 Y_1)),\\
&\tab (1+q Y_2+q^2 Y_2Y_3)\inv(1+q^3Y_2+q^2Y_2Y_1)\inv Y_3Y_2Y_1,\\
&\tab Y_3(1+q\inv Y_2 X_q)\inv(1+q Y_1\inv Y_2\inv Y_3\inv (1+q^3 Y_2+q^2 Y_2 Y_3)(1+q Y_2+q^2 Y_2 Y_1)))\\
&=(q\inv Y_3\inv Y_2\inv(1+q Y_2),  (1+q Y_2+q^2 Y_2 Y_3)\inv(1+q^3 Y_2+q^2 Y_2 Y_1)\inv Y_3 Y_2 Y_1 ,\\
&\tab q\inv Y_1\inv Y_2\inv(1+q Y_2)),\\
Y(8)&=(Y_3(1+q Y_2\inv )\inv,(1+qY_2+q^2 Y_2 Y_1)\inv Y_1,q Y_2\inv Y_1\inv(1+q Y_2)),\\
Y(9)&=(Y_3(1+q Y_2\inv )\inv, Y_2\inv, Y_1(1+q Y_2\inv )\inv),\\
Y(10)&=(Y_3, Y_2, Y_1),
\end{split}
\end{align}
\normalsize
where 
\Eq{
X_q:=(1+q Y_1)(1+q Y_3)=(1+q Y_1+q Y_3+q^2 Y_1 Y_3).
\label{Xq_def}
} The mutation sequence is therefore a $(13)$-period.
The $c$-vectors and tropical signs are given by
\begin{align}
\begin{array}{ll}
c_1 = (1,0,0),&\e_1 = +,\\
c_2 = (0,0,1),&\e_2 = +,\\
c_3= (0,1,0),&\e_3 =+ ,\\
c_4 = (-1,0,0),&\e_4 = -,\\
c_5 = (0,0,-1),&\e_5=-,\\
c_6 = (-1,-1,-1),&\e_6=-,\\
c_7= (0,-1,-1),&\e_7 = -,\\
c_8 = (-1,-1,0),&\e_8 = -,\\
c_9 = (0,-1,0),&\e_9 = -.
\end{array}
\end{align}

The quantum dilogarithm identity in the tropical form is\footnote{
This identity can also be written as
$$(Y_1;q^2)_\oo(Y_3;q^2)_\oo(Y_2;q^2)_\oo = (Y_2;q^2)_\oo (-Y_2Y_1;q^2)_\oo (-Y_2 Y_3;q^2)_\oo (Y_2 Y_1Y_3;q^2)_\oo (Y_3;q^2)_\oo.$$
}
\begin{align}
\begin{split}
&\Psi_q(Y_1)\Psi_q(Y_3)\Psi_q(Y_2)\Psi_q(Y_1)\inv\Psi_q(Y_3)\inv \\
&\qquad\Psi_q(Y_1Y_2Y_3)\inv \Psi_q(qY_2Y_3)\inv \Psi_q(qY_2Y_1)\inv \Psi_q(Y_2)\inv = 1,
\end{split}
\end{align}
This can be easily seen as a consequence of the pentagon \eqref{Psi_pentagon}.

The universal form for the quantum dilogarithm identity is given by
\begin{align}
\begin{split}
&\Psi_q(Y_2(1+qY_1)(1+qY_3))\Psi_q(Y_3)\Psi_q(Y_1)\\
&=\Psi_q((1+qY_2 X_q)\inv Y_1)\Psi_q((1+qY_2X_q)\inv Y_3)\\
& \qquad
\Psi_q( (1+qY_2+Y_1Y_2)\inv (1+q^3Y_2+Y_3Y_2)\inv Y_3Y_2Y_1) \\
&\qquad \Psi_q(q(1+qY_2)\inv Y_2 Y_3)\Psi_q(q (1+qY_2)\inv Y_1 Y_2)\Psi_q(Y_2).
\end{split}
\end{align}

\end{Ex}

\subsection{Modular Double}

Let $b\in \C$ and write $q=e^{i \pi b^2}$, and define a new variable $\til{q}$ 
by $\til{q}=e^{i \pi b^{-2}}$.

\begin{Def}[Non-Compact Quantum Dilogarithm \cite{Faddeev:1994fw,Faddeev:1995nb}]
We define the non-compact quantum dilogarithm function $\Phi_b(x)$ by an integral\footnote{
This is also denoted by $e_b(x)$ in the literature. Note that this is the inverse of $\Phi_b$ used in \cite{KashaevN}.
}
\begin{align}
\Phi_b(x):= \exp\left(\int_{\bR+i0}\frac{e^{-2ixt}}{4\sinh(bt)\sinh(b^{-1}t)}\frac{dt}{t}\right)
\end{align}
in the strip $| \mathrm{Im}(z) | < \big| \mathrm{Im} \left(\frac{ i (b+b^{-1})}{2} \right)\big|$.
This function can then be analytically continued into the whole complex plane.
For $\mathrm{Im}(b^2)>0$, we have $|q|, |\til{q}\inv|<1$ and it turns out that $\Phi_b(x)$ can be written as an infinite product
\begin{align}
\Phi_b(x)=\frac{\prod_{k=0}^\oo (1+q^{2k+1}e^{2\pi b x})}{\prod_{k=0}^\oo (1+\til{q}^{-(2k+1)}e^{2\pi b\inv x})}= \frac{\Psi_{\til{q}^{-1}}(e^{2\pi b^{-1} x})}{\Psi_q(e^{2\pi b x})}.
\label{Psi_as_ratio}
\end{align}

\end{Def}

In the following we will also use a function $g_b(u)$, 
which is related to the function $\Phi_b(x)$ by the relation
$g_b(e^{2\pi bx})=\Phi_b(x)$.
Let $x,p$ with $[x,p]=\frac{1}{2\pi i}$ be the standard position and momentum operators acting on $L^2(\R)$. Also denote by 
\Eq{u=e^{2\pi bx}, \quad v=e^{2\pi bp},} 
such that 
\Eq{uv=q^2 vu} on a natural dense core of their domain \cite{Schmudgen}. Then the functions $\Phi_b(x), g_b(u)$ satisfy the pentagon identity as unitary operators on $L^2(\R)$:\footnote{The identity for $\Phi_b(x)$ is called the local form in \cite{KashaevN}.}
\begin{align}
\begin{split}
&\Phi_b(p)\Phi_b(x) = \Phi_b(x)\Phi_b(x+p)\Phi_b(p),
\\
&g_b(v)g_b(u) = g_b(u)g_b(qvu)g_b(v).
\label{Phi_gb_pentagon}
\end{split}
\end{align}

The concept of modular double is introduced in \cite{Faddeev:1995nb} and has been considered in various contexts of mathematics and physics, in particular to representation theory of split real quantum groups \cite{FrenkelIp, Ip2} and quantum Liouville theory \cite{PonsotTeschner1, BytskoTeschner} involving the Weyl operators. The idea is that the Weyl pair $\{u,v\}$ has a ``modular double counterpart" which mutually commutes with them, and it is natural to consider both pairs of Weyl operators in their functional analysis. 

Given a Weyl pair $\{u,v\}$ as above, their modular double counterpart is defined through the use of functional calculus on positive (essentially) self-adjoint operators
\Eq{\til{u}:=u^{\frac{1}{b^2}}=e^{2\pi b\inv x},\quad \til{v}:=v^{\frac{1}{b^2}}=e^{2\pi b\inv p}, }
so that \Eq{\til{u}\til{v}=\til{q}^2\til{v}\til{u}}
and consequently the pair $\{\til{u}, \til{v}\}$ mutually commute with $\{u,v\}$ on certain dense domain. With this notation, we see that
\begin{align}
g_b(u)= \frac{\Psi_{\til{q}^{-1}}(\til{u})}{\Psi_q(u)},
\label{gb_as_ratio}
\end{align}
and in particular it is manifest that we have the modular duality:
\Eq{g_b(u) = g_{b\inv}(\til{u}).}

Following the same idea, we introduce the modular dual variables formally by
\Eq{\til{Y_i}:=Y_i^{\frac{1}{b^2}}}
such that 
\Eq{\til{Y}_i \til{Y}_j = \til{q}^{2b_{ji}} \til{Y}_j \til{Y}_i.}
and use them as the initial seed of the \emph{dual quantum $y$-variables}, defined below:

\begin{Def}[Dual Quantum $y$-variable]
Let us define the dual quantum cluster $y$-variables by 
replacing $q$ by $\til{q}$ in the definition of quantum $y$-variables (Definition \eqref{quantum_y_def}).
The mutation rule \eqref{quantum_y_mutate} is then replaced by
\Eq{\til{Y}_i'=\case{
\til{Y}_k\inv,&i=k,\\
\displaystyle \til{Y}_i\prod_{m=1}^{b_{ik}}(1+\til{q}^{2m-1}\til{Y}_k),& i\neq k, b_{ik}\ge 0,\\
\displaystyle \til{Y}_i\prod_{m=1}^{|b_{ik}|}(1+\til{q}^{2m-1}\til{Y}_k\inv)\inv,&  i\neq k, b_{ik}<0.
}
 \label{dual_quantum_y_mutate}
}
Moreover, the dual quantum $y$-variables $\til{Y}_i$ commute with the original quantum $y$-variables $Y_i$.
\end{Def}

Since $Y_i$'s and their duals $\til{Y}_i$'s commute, the ratio of compact dilogarithms in \eqref{gb_as_ratio} can be factorized, and
we immediately learn that we can derive the quantum dilogarithm identities (with reverse order) for $g_b(u)$.

\begin{Thm}[Identities for Non-Compact Quantum Dilogarithms \cite{KashaevN}] 
Under the assumptions of Theorem \ref{qdilog_tropical}, we have, in tropical form
\Eq{g_b(Y_{\epsilon_L c_L})^{\epsilon_L} 
\ldots 
g_b(Y_{\epsilon_1 c_1} )^{\epsilon_1}=1,
\label{gb_id_tropical}
}
and in universal form
\Eq{g_b(Y_{k_1}(1)^{\epsilon_1})^{\epsilon_1}...g_b(Y_{k_L}(L)^{\epsilon_L})^{\epsilon_L}=1.
\label{gb_id_universal}
}
\end{Thm}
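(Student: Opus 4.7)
The plan is to exploit the factorization $g_b(u)=\Psi_{\til{q}^{-1}}(\til{u})/\Psi_q(u)$ of \eqref{gb_as_ratio} together with the mutual commutativity of the $Y_i$ with the $\til{Y}_j$, reducing the claim to two independent applications of the compact quantum dilogarithm identities of Theorems \ref{qdilog_tropical} and \ref{qdilog_id_universal}.

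The first step is to record the formal-power-series identity $\Psi_{q^{-1}}(x)=\Psi_q(x)^{-1}$, which is immediate from the product expansion $\Psi_q(x)^{-1}=(-qx;q^2)_\infty$, and to use it to rewrite
\Eqn{g_b(u)=\Psi_{\til{q}^{-1}}(\til{u})\,\Psi_q(u)^{-1}=\Psi_{\til{q}}(\til{u})^{-1}\,\Psi_q(u)^{-1},}
where the two factors commute because $[u,\til{u}]=0$. Since the dual quantum $y$-variables $\til{Y}_i$ obey exactly the same mutation and commutation relations as the $Y_i$ with $q$ replaced by $\til{q}$, and the $c$-vectors and tropical signs depend only on the classical combinatorics of the mutation sequence, Theorem \ref{qdilog_tropical} applied to $(B,\til{Y})$ with parameter $\til{q}$ furnishes the companion identity
\Eqn{\Psi_{\til{q}}(\til{Y}_{\epsilon_1 c_1})^{\epsilon_1}\cdots\Psi_{\til{q}}(\til{Y}_{\epsilon_L c_L})^{\epsilon_L}=1.}

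For the tropical form \eqref{gb_id_tropical} I would expand each factor $g_b(Y_{\epsilon_j c_j})^{\epsilon_j}$ in the reversed-order product using the rewriting above, then use commutativity of the $Y$- and $\til{Y}$-sectors to split the product as
\Eqn{\bigl[\Psi_q(Y_{\epsilon_L c_L})^{-\epsilon_L}\cdots\Psi_q(Y_{\epsilon_1 c_1})^{-\epsilon_1}\bigr]\bigl[\Psi_{\til{q}}(\til{Y}_{\epsilon_L c_L})^{-\epsilon_L}\cdots\Psi_{\til{q}}(\til{Y}_{\epsilon_1 c_1})^{-\epsilon_1}\bigr],}
each bracket being the inverse of the corresponding tropical identity and hence equal to $1$. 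The universal form \eqref{gb_id_universal} follows by the same bookkeeping with Theorem \ref{qdilog_id_universal} in place of Theorem \ref{qdilog_tropical}; the order reversal between the $g_b$ and $\Psi_q$ versions is automatic, since inversion reverses the order of a product. The main conceptual input is the rewriting $\Psi_{\til{q}^{-1}}=\Psi_{\til{q}}^{-1}$, which puts the tilde-sector into a form to which Theorem \ref{qdilog_tropical} directly applies; once this is in place the rest is a regrouping of commuting factors.
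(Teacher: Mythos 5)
Your proposal is correct and follows essentially the same route as the paper, which disposes of this theorem in one sentence: factorize $g_b$ via \eqref{gb_as_ratio} and use the mutual commutativity of the $Y$- and $\til{Y}$-sectors to split the product into two copies of the compact identity. The one detail you make explicit that the paper leaves implicit is the formal inversion $\Psi_{\til{q}^{-1}}=\Psi_{\til{q}}^{-1}$ needed to apply Theorem \ref{qdilog_tropical} to the dual seed $(B,\til{Y})$ with parameter $\til{q}$, and this is exactly what produces the reversed ordering.
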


\section{Cyclic Dilogarithm Identities (I)}\label{sec.cyclic}

\subsection{Root of Unity Limit}

Let us choose a positive integer $N$.
In this section we will choose the value of $q$
to be $q=\zeta:=e^{\frac{\pi i}{N}}$, a primitive $2N$-th root of unity.
Then $q^2$ is the $N$-th primitive root of unity, and $q^N=-1$.
We also have 
\Eq{
\til{q}=\til{\zeta}:=e^{\pi iN} = (-1)^N.
}

With this specialization, the relation \eqref{quantum_y_comm} reduces to
\Eq{
Y_i Y_j= \zeta^{2 b_{ji}} Y_j Y_i,
\label{cyclic_Y_comm}
}
and
the dual quantum $y$-variable is simply given by
\Eq{
\til{Y}_i=Y_i^N,
}
which by definition commute with all the $Y_j$.

\begin{Prop}
When $\til{q}=\til{\zeta}=e^{\pi iN} = (-1)^N$, the transformation property of the dual quantum $y$-variable \eqref{dual_quantum_y_mutate}
reduces to 
\Eq{\til{Y_i}'=\case{
\til{Y}_k\inv,&i=k,\\
\displaystyle \til{Y_i}(1+\til{\zeta}\til{Y}_k)^{b_{ik}},& i\neq k, b_{ik}\ge 0,\\
\displaystyle \til{Y_i}(1+\til{\zeta}\til{Y}_k\inv)^{b_{ik}},&  i\neq k, b_{ik}<0.
}
\label{cyclic_Y_comm_2}
}
\end{Prop}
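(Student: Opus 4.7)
The plan is essentially a direct computation: show that every factor appearing in the product on the right-hand side of \eqref{dual_quantum_y_mutate} becomes identical at the specialization $\tilde{q} = \tilde{\zeta} = (-1)^N$, so the product collapses to a single power.

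First I would observe that $\tilde{\zeta}^{2} = e^{2\pi i N} = 1$, so $\tilde{\zeta}$ is its own inverse and $\tilde{\zeta}^{2m-1} = \tilde{\zeta}^{-1} \cdot \tilde{\zeta}^{2m} = \tilde{\zeta}$ for every integer $m$. Consequently, in the case $i \neq k$, $b_{ik} \geq 0$, the $b_{ik}$ factors in
\[
\tilde{Y}_i \prod_{m=1}^{b_{ik}} \bigl(1 + \tilde{\zeta}^{2m-1} \tilde{Y}_k\bigr)
\]
are all equal to $(1 + \tilde{\zeta} \tilde{Y}_k)$. Since $\tilde{Y}_k$ commutes with itself, the product is just $(1 + \tilde{\zeta} \tilde{Y}_k)^{b_{ik}}$, matching the claimed formula.

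The case $i \neq k$, $b_{ik} < 0$ is handled the same way, with each of the $|b_{ik}|$ factors $(1 + \tilde{\zeta}^{2m-1} \tilde{Y}_k^{-1})$ equal to $(1 + \tilde{\zeta} \tilde{Y}_k^{-1})$; the overall inverse converts the exponent $|b_{ik}|$ into $b_{ik}$, giving $(1 + \tilde{\zeta} \tilde{Y}_k^{-1})^{b_{ik}}$. The case $i = k$ is unchanged since it never involved $\tilde{q}$ to begin with.

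There is essentially no obstacle here: the only thing one needs is the observation that $\tilde{\zeta}^2 = 1$ so that the $m$-dependence in the factors drops out. One small thing worth flagging is that $(1 + \tilde{\zeta} \tilde{Y}_k)^{b_{ik}}$ should be interpreted as a power in the commutative algebra generated by $\tilde{Y}_k$ (which is fine because $\tilde{Y}_k$ and its inverse commute with themselves), so no ordering ambiguity arises.
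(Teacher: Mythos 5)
Your proof is correct and follows the same route as the paper: the key point in both is that $\til{\zeta}^{2m-1}=(-1)^{(2m-1)N}=(-1)^N=\til{\zeta}$ for all $m$, so the $m$-dependence of the factors drops out and the product collapses to a single power. Your extra remarks on the sign of the exponent in the $b_{ik}<0$ case and on the absence of ordering issues are fine but not needed beyond what the paper states.
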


\begin{proof} 

When $\til{q}=\til{\zeta}= (-1)^N$,
we have
$$\til{q}^{2m-1} = (-1)^{(2m-1)N} = (-1)^N = \til{\zeta}.$$
Hence the product $\prod_{m=1}^{|b_{ik}|}$ in \eqref{dual_quantum_y_mutate} becomes trivial, leading to the formula above.

\end{proof}

\subsection{Identities for $d_N(x)$}

\begin{Def}[Cyclic $y$-variable]\label{cyclic_y_def}
Let us define the cyclic $y$-variables $\cY_i$.
They satisfy the commutation relation
\begin{align}
\cY_i \cY_j=\zeta^{2b_{ji}} \cY_j \cY_i ,
\end{align}
which coincides with the relation \eqref{quantum_y_comm}
for the quantum $y$-variables.
We also take the initial value to be $\cY_i=Y_i$.
However, we require $\cY_i$ to transform formally as the dual quantum $y$-variable
$\cY_i = \til{Y_i}^{\frac{1}{N}}$, so that we define

\Eq{\cY_i'=\case{
\cY_k\inv,&i=k,\\
\displaystyle \cY_i(1+\til{\zeta}\cY_k^N)^{\frac{b_{ik}}{N}},& i\neq k, b_{ik}\ge 0,\\
\displaystyle \cY_i(1+\til{\zeta}\cY_k^{-N})^{\frac{b_{ik}}{N}},&  i\neq k, b_{ik}<0.
}
 \label{cyclic_y_mutate}
}

We can repeat this for a mutation sequence, to obtain 
cyclic $y$-variables. Similar to the previous cases, we define
$(B(1), \cY(1)):=(B, \cY)=(B, Y(1))$ and
$(B(t+1), \cY(t+1)):=\mu_{k_t}(B(t), \cY(t))$.
We can also extend the definition of $\cY_i$ to $\cY_{\alpha}$ with $\alpha\in \bZ^{|I|}$,
as in \eqref{quantum_torus} in the case of $Y_i$.
\end{Def}

\begin{Def}[Cyclic Dilogarithm Function]\label{cyclicdilog}
For a given positive integer $N$, we define the \emph{cyclic dilogarithm function} $d_N(x)$ by\footnote{
Another definition found in the literature is 
\begin{align}
d^*_N(x)&:=
\left(1-x ^N\right)^{\frac{N-1}{2N}} \prod_{k=1}^{N-1} (1- \zeta^{2k} x)^{-\frac{k}{N}}.
\end{align}
This is related to our $d_N(x)$ by $d_N(x)=d^*_N(-\zeta x)$.
}
\begin{align}
d_N(x)&:=
\left(1+(-x)^N\right)^{\frac{N-1}{2N}} \prod_{k=1}^{N-1} (1+\zeta^{2k+1} x)^{-\frac{k}{N}}.
\label{cyclic_dilog_def}
\end{align}
\end{Def}

We are now ready to state one of our main theorems.

\begin{Thm}[Cyclic Dilogarithm Identity in Dual Universal Form]
Under the assumptions of Theorem \eqref{qdilog_tropical}, we have
\Eq{
d_N(\cY_{k_L}(L)^{\e_L})^{\e_L} \cdots d_N(\cY_{k_1}(1)^{\e_1})^{\e_1} = 1.
\label{cyclic_dilog_id_eq}
}
\label{cyclic_dilog_id}
\end{Thm}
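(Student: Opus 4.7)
The plan is to derive the identity from the universal form \eqref{gb_id_universal} of the non-compact quantum dilogarithm identity, specialized to $b^2 = 1/N$ (so that $q = \zeta$ and $\til{q} = (-1)^N$), by exploiting the factorization $g_b(u) = \Psi_{\til{q}\inv}(\til{u})/\Psi_q(u)$ together with the fact that $\til{u}=u^N$ becomes central at the root of unity.

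First, I would establish a factorization of the form
\begin{align*}
g_{1/\sqrt N}(u) \;=\; d_N(u)\cdot h_N(u^N),
\end{align*}
in which $h_N$ is an explicit algebraic function depending only on the central element $\til{u}=u^N$. Heuristically this comes from grouping the infinite product \eqref{Psi_as_ratio} into blocks of $N$ factors and using the finite identity $\prod_{k=0}^{N-1}(1+\zeta^{2k+1}u) = 1+(-u)^N$; rigorously it should be obtained from the integral representation of $\Phi_b$ combined with the limiting procedure developed in the Appendix.

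Next, I would substitute this factorization into the $g_b$-identity \eqref{gb_id_universal} along the mutation sequence $\bm{k}$. Each $h_N(\til{Y}_{k_t}(t))$ is central, so these factors separate from the $d_N$'s; their product reduces, via the simplified mutation rule \eqref{cyclic_Y_comm_2} at $\til{q} = (-1)^N$, to a purely classical $y$-variable identity and therefore equals $1$ unconditionally. What remains is the non-central relation
\begin{align*}
d_N(Y_{k_1}(1)^{\e_1})^{\e_1}\cdots d_N(Y_{k_L}(L)^{\e_L})^{\e_L} \;=\; 1.
\end{align*}
Reversing the order of this product to match the statement in the theorem is handled by a shuffle argument analogous to \eqref{shuffle_id}--\eqref{shuffle_demonstration}: one checks inductively on $t$, using \eqref{cyclic_y_mutate}, the cyclic analogue $\cY_{k_t}(t) = \mathrm{Ad}\bigl[d_N(\cY_{\e_1 c_1})^{\e_1}\cdots d_N(\cY_{\e_{t-1}c_{t-1}})^{\e_{t-1}}\bigr](\cY_{c_t})$, and iterating the corresponding shuffle move $L$ times produces the stated reversed-order product with arguments $\cY_{k_t}(t)^{\e_t}$. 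The passage from $Y_{k_t}(t)$ to $\cY_{k_t}(t)$ is consistent because the two differ only by central $\til{Y}$-factors, whose contributions were already absorbed in the previous step.

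The main obstacle is the rigorous justification of the factorization $g_{1/\sqrt N}(u) = d_N(u)\cdot h_N(u^N)$, since both $\Psi_q$ and $\Psi_{\til{q}\inv}$ diverge individually at $q = \zeta$ and hence a careful regularization is required---either through the integral representation of $\Phi_b$, or via analytic continuation from $\mathrm{Im}(b^2)>0$. A secondary subtlety is choosing consistent branches for the fractional exponents $k/N$ in \eqref{cyclic_dilog_def} when $\cY_{k_t}(t)$ is a non-central operator, so that the stated identity holds as an operator equality and not merely a symbolic relation.
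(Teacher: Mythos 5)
There is a genuine gap at the heart of your argument: the claim that the remainder factor $h_N$ in the factorization $g_{1/\sqrt N}(u)=d_N(u)\,h_N(u^N)$ is central, so that the $h_N$'s ``separate from the $d_N$'s'' and their product equals $1$ ``unconditionally.'' The remainder is forced to be (up to normalization) $\newg_{1,N}(u)=g_1(e^{\pi i}\til q u^N)^{1/N}(1+\til q u^N)^{(1+N)/2N}$ with $g_1(x)=e^{\frac{i}{2\pi}\Li_2(x)}(1-x)^{\log x/2\pi i}$, and the $\log$ in the exponent destroys centrality in the operational sense: under $Y_i\mapsto q^{2b_{ji}}Y_i$ one has $\log\bigl((q^{2b_{ji}}Y_i)^N\bigr)=\log(Y_i^N)+2\pi i\,b_{ji}$ even though $q^{2N}=1$, so $h_N(Y_i^N)\,Y_j\,h_N(Y_i^N)\inv=Y_j(1+\til q Y_i^N)^{-b_{ji}/N}$. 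This adjoint action is exactly the cyclic mutation \eqref{cyclic_y_mutate}; it is why the theorem is stated in terms of $\cY_{k_t}(t)$ rather than $Y_{k_t}(t)$, and it means the $h_N$'s cannot be pulled out without changing the arguments of the $d_N$'s. Moreover, even after shuffling them out correctly, the vanishing of the product of $h_N$'s is not a triviality reducible to \eqref{cyclic_Y_comm_2}: it is precisely Theorem \ref{g1_identity}, which the paper \emph{derives from} the theorem you are trying to prove, so your route is circular unless you supply an independent proof (as in the Appendix, which requires the same regularized asymptotic analysis you are trying to avoid). A further unaddressed point is the identification $(Y_{k_t}(t))^N=\til Y_{k_t}(t)$ implicit in your use of the universal form \eqref{gb_id_universal}: the $N$-th power of a mutated quantum $y$-variable is central, but identifying it with the mutated dual variable is a nontrivial Frobenius-type statement.

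For comparison, the paper sidesteps all of this by staying with the compact dilogarithm and regularizing the root of unity: it sets $q=e^{-\tau/2N^2}\zeta$, starts from the \emph{tropical} form \eqref{eq.qdilog_tropical} (whose arguments are monomials, so no Frobenius issue arises), and uses $\Psi_q(x)\inv=R_\tau((-x)^N)\,d_N(x)(1+\cO(\tau))$. The divergent factor $R_{\tau,N}$ plays the role of your $h_N$: at finite $\tau$ the element $Y_i^N$ is not central, and the $1/\tau$ in the exponent of $R_\tau$ compensates the $\cO(\tau)$ noncommutativity to yield the finite conjugation \eqref{conjugate_R}, identical to the cyclic mutation. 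Shuffling the $R$'s to one side converts the $d_N$-arguments into cyclic $y$-variables, and the residual product of $R$'s is killed by Proposition \ref{R-tropical}. If you want to salvage your approach, you must (i) compute the adjoint action of $h_N$ honestly (it is the Lemma inside the proof of Theorem \ref{g1_identity}), (ii) accept that the shuffle produces $\cY$'s, and (iii) prove the cancellation of the $h_N$-product independently of Theorem \ref{cyclic_dilog_id} --- at which point you have essentially reconstructed the paper's argument in the non-compact setting.
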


\begin{proof}
The rest of this subsection is devoted to a proof of this theorem.

Following \cite{BazhanovR}, the basic strategy is to take the limit of the quantum dilogarithm identity for $\Psi_q(x)$. First of all let us consider the classical limit by taking 
\Eq{ q:=e^{-\frac{\tau}{2}},\quad \tau\in\R_{>0}}
so that $|q|<1$, and we take the limit when $\tau\to 0$. Then it is well-known that
\Eq{\Psi_q(x)^{-1}=(-qx;q^2)_{\infty} = R_{\tau}(x) \left(1+\mathcal{O}(\tau)  \right),
\label{R_def}
}
with $R_{\tau}(x)$ defined by
\begin{align}
R_{\tau}(x):= e^{-\frac{\textrm{Li}_2(-x)}{\tau}}.
\end{align}

The quantum dilogarithm identity in the tropical form \eqref{qdilog_tropical}
implies the following
\begin{Prop} \label{R-tropical} Let the quantum $Y$-variable be associated to $q=e^{-\tau/2}$. Then we have
\begin{align}
\lim_{\tau\to 0} R_{\tau}(Y_{\e_Lc_L})^{\epsilon_L} \cdots R_{\tau}(Y_{\e_1c_1})^{\epsilon_1}=1.
\end{align}
\end{Prop}
This will be important in our later analysis. The limit here is analyzed for example in \cite{KashaevN}.
As proven there, the condition that the resulting expression is trivial gives the 
classical dilogarithm identity.

Now let us consider the case of the root of unity limit. We let
\Eq{q:= e^{-\frac{\tau}{2N^2}} \zeta,}
and take the limit as $\tau\to 0$. In this limit, the quantum dilogarithm $\Psi_q(x)$ behaves as \cite{BazhanovR}
\footnote{
This is equivalent to the formula
\begin{align}
(x;q^2)_{\infty} = (1-x^N)^{\frac{1-N}{2N}}R^*_{\tau}(x^N) \, d^*_N(x) \left(1+\mathcal{O}(\tau)  \right),
\label{eq.qdilog_root_limit_another}
\end{align}
with
\begin{align}\label{Rstar}
R^*_{\tau}(x)&:=(1-x)^{\frac{1}{2}} e^{-\frac{\textrm{Li}_2(x)}{\tau}}.
\end{align}
}
\begin{align}
\Psi_q(x)^{-1}=(-q x;q^2)_{\infty} = R_{\tau}\left((-x)^N\right) \, d_N(x) \left(1+\mathcal{O}(\tau)  \right),
\label{eq.qdilog_root_limit}
\end{align}
with $d_N(x)$ defined previously in \eqref{cyclic_dilog_def} and $R_{\tau}(x)$ in \eqref{R_def}. To save notations let us denote by 
\Eq{R_{\tau,N}(x):=R_{\tau}((-x)^N).\label{RtauN}}

Now, substituting the limiting form \eqref{eq.qdilog_root_limit} into \eqref{eq.qdilog_tropical},
we obtain an identity involving the functions $R_{\tau,N}(x)$ and $d_N(x)$:
\Eq{\lim_{\tau\to 0}R_{\tau,N}(Y_{\e_Lc_L})^{\e_L}d_N(Y_{\e_Lc_L})^{\e_L}\cdots R_{\tau,N}(Y_{\e_1c_1})^{\e_1}d_N(Y_{\e_1c_1})^{\e_1}=1.}
To simplify the resulting expression, we can shuffle the positions of $R_{\tau,N}(x)$,
using the following Lemma:

\begin{Lem}[Conjugation by $R_{\tau,N}$ \cite{BazhanovR}]\label{com} 
Let $Y_i Y_j = q^{2b_{ji}} Y_j Y_i$. 
We have
\begin{align}
\lim_{\tau\to 0} R_{\tau,N}(Y_i) \cdot Y_j \cdot R_{\tau,N}(Y_i)\inv = Y_j(1+\til{q}Y_i^N)^{-\frac{b_{ji}}{N}}.
\label{conjugate_R}
\end{align}
\end{Lem}

\begin{proof} 
We here provide a self-contained proof using functional calculus.
We can commute $u_j$ to the left and get
\Eqn{
\mathrm{(LHS)}&=Y_j \cdot\lim_{\tau\to 0} R_{\tau,N}(q^{2b_{ji}}Y_i)R_{\tau,N}(Y_i)\inv\\
&=Y_j \cdot\lim_{\tau\to 0} \frac{R_{\tau}(e^{-\tau b_{ji}/N}(-Y_i)^N)}{R_{\tau}((-Y_i)^N)}.
}
Now the limit can be computed directly by calculus. Using $\Li_2(x) = -\int_0^x \frac{\log(1-t)}{t} dt$, and let $q(\tau) = e^{-\tau b_{ji}/N}$, we have
\begin{align}
\begin{split}
\lim_{\tau\to 0} \log \frac{R_{\tau}(q(\tau)(-x)^N)}{R_{\tau}((-x)^N)}
&=\lim_{\tau\to 0}\frac{1}{\tau}\left(\int_0^{-q(\tau)(-x)^N} \frac{\log(1-t)}{t} dt-\int_0^{(-x)^N} \frac{\log(1-t)}{t} dt\right)\\
&=\lim_{\tau\to 0}\frac{1}{\tau}\left(\int_{(-x)^N}^{-q(\tau)(-x)^N} \frac{\log(1-t)}{t} dt\right)\\
&=\lim_{\tau\to 0}\frac{\log(1-(-q(\tau)(-x)^N))}{-q(\tau)(-x)^N}\cdot \left(-\frac{dq(\tau)}{d\tau}(-x)^N\right)\\
&=-\log(1+(-x)^N) \frac{b_{ji}}{N}\\
&=-\log(1+\til{q}x^N)\frac{b_{ji}}{N}.
\label{diff_comp}
\end{split}
\end{align}
This proves \eqref{conjugate_R}.
\end{proof}

\begin{Rem}
In the limit $\tau\to 0$, $Y_i^N$, and hence any function of them, commute with all the $Y_j$.
However, in the analysis above the shuffling should first be performed for finite $\tau$, and
there remains a non-trivial finite contribution in the limit $\tau \to 0$
due to the canceling factor $1/\tau$ in 
the definition of $R_{\tau,N}(x)$.

\end{Rem}

Now the crucial observation is that the conjugation by $Y_i$ \eqref{conjugate_R}, taking into accounts the tropical signs $\e_t$, has exactly the same effect 
as the mutation rule for the cyclic $y$-variables \eqref{cyclic_y_mutate}.
In particular we can pull out all the $R_{\tau,N}$'s to the left or right,
as the cost of replacing the arguments of $d_N$'s by their
mutated versions. To be more precise, by mimicking the proof of 
\eqref{shuffle_id},
we obtain
\begin{align}
\cY_{k_t}(t) = \lim_{\tau\to 0}\mathrm{Ad}\inv
\left[
  R_{\tau,N}\left( Y_{\epsilon_{t-1} c_{t-1}} \right)^{\epsilon_{t-1}} \cdots
  R_{\tau,N}\left( Y_{\epsilon_t c_1} \right)^{\epsilon_1 }
\right]
\left( 
  \cY_{c_t}
\right) .
\end{align}
We can now appeal to the shuffling argument which we utilized in 
the proof of Theorem \ref{qdilog_id_universal}.
By shuffling the position of $R_{\tau, N}$, we obtain
(recall $Y_{\alpha}=Y_{\alpha}(1)=\cY_{\alpha}$)
\begin{align*}
&R_{\tau,N}\left(Y_{c_{L}\e_{L}} \right)^{\e_{L}} d_N(Y_{c_L\e_L})^{\e_{L}}  
   \cdots 
d_N(Y_{c_2\e_2})^{\e_{2}}  R_{\tau,N}\left(Y_{c_{1}\e_{1}} \right)^{\e_{1}} d_N(Y_{c_1\e_1})^{\e_{1}}  
        \\
&\quad=
R_{\tau,N}\left(Y_{c_{L}\e_{L}} \right)^{\e_{L}} d_N(Y_{c_L\e_L})^{\e_{L}}  
   \cdots  \\
& \qquad \qquad R_{\tau,N}\left(Y_{c_{1}\e_{1}} \right)^{\e_{1}} d_N(\cY_{k_2}(2)^{\e_2})^{\e_2} d_N(\cY_{k_1}(1)^{\e_1})^{\e_{1}}  
\left(1+\cO(\tau)\right).
\end{align*}
By repeating this, we arrive at
$$
R_{\tau,N}\left(Y_{c_L\e_L}\right)^{\e_L}\cdots R_{\tau,N}\left(Y_{c_1\e_1})\right)^{\e_1}d_N(\cY_{k_L}(L)^{\e_L})^{\e_L}...d_N(\cY_{k_1}(1)^{\e_1})^{\e_1}(1+\cO(\tau)) = 1.
$$

The remaining task is to show that the product of $R$'s vanish in the limit $\tau\to 0$. Recall
$R_{\tau,N}(Y_i):= R_{\tau}((-Y_i)^N).$
Let $Y_i' :=(-Y_i)^N$. Then 
$$Y_i'Y_j' = (-Y_i)^N(-Y_j)^N = q^{2N^2} (-Y_j)^N(-Y_i)^N = e^{-\tau}Y_j'Y_i',$$
hence all the arguments of $R$ satisfy the same commutation relation as the $Y_i$'s with $q=e^{-\tau/2}$. Hence by Proposition \ref{R-tropical}, we conclude that the product of $R$ is trivial. Since $d_N(x)$ does not depend on $\tau$ in the limit, we have proven the identity \eqref{cyclic_dilog_id_eq} for the function $d_N(x)$.
\end{proof}

\begin{Rem}
In this derivation, we shuffled all the $R$'s to the left of the product.
Alternatively we can choose to move $L'=0, \cdots, L$ of the $R$'s to the left and $L-L'$ to the right
,
and cancel the product of $R$'s in the middle.
We then obtain variants of the expression above, with
some of the variables rescaled according to \eqref{conjugate_R}.
\end{Rem}

We next rewrite the cyclic dilogarithm identity in another form.

\begin{Thm}[Cyclic Dilogarithm Identities in Standard Universal Form] We have the cyclic dilogarithm identities
\Eq{ d_N(Y_{k_1}(1)^{\e_1})^{\e_1}\cdots d_N(Y_{k_L}(L)^{\e_L})^{\e_L}=1.
\label{cyclic_id_standard_eq}}
for the standard quantum $y$-variables, with the reverse ordering.
\label{cyclic_id_standard}
\end{Thm}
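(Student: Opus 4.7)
My plan is to derive \eqref{cyclic_id_standard_eq} directly from the tropical form (Theorem \ref{qdilog_tropical}) via a telescoping argument built on the shuffle identity \eqref{shuffle_id}, followed by the root-of-unity limit. This bypasses the cyclic $y$-variables used in Theorem \ref{cyclic_dilog_id}, and is arguably cleaner because the arguments of $d_N$ in \eqref{cyclic_id_standard_eq} are already in the correct ``universal'' positions $Y_{k_t}(t)^{\e_t}$, so no shuffling of the $d_N$ arguments is required.

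Set $A_t := \Psi_q(Y_{\e_1 c_1})^{\e_1} \cdots \Psi_q(Y_{\e_{t-1} c_{t-1}})^{\e_{t-1}}$ with $A_1 = 1$. The shuffle identity \eqref{shuffle_id} gives $Y_{k_t}(t) = \mathrm{Ad}[A_t](Y_{c_t})$, so by functional calculus (since conjugation commutes with functional calculus)
\begin{align*}
d_N(Y_{k_t}(t)^{\e_t})^{\e_t} \;=\; \mathrm{Ad}[A_t]\!\left(d_N(Y_{\e_t c_t})^{\e_t}\right).
\end{align*}
Expanding the LHS of \eqref{cyclic_id_standard_eq} as $\prod_{t=1}^L A_t\, d_N(Y_{\e_t c_t})^{\e_t}\, A_t^{-1}$ and using both $A_t^{-1}A_{t+1} = \Psi_q(Y_{\e_t c_t})^{\e_t}$ and $A_L^{-1} = \Psi_q(Y_{\e_L c_L})^{\e_L}$ (the latter being Theorem \ref{qdilog_tropical}), the product telescopes into
\begin{align*}
\prod_{t=1}^L d_N(Y_{\e_t c_t})^{\e_t}\,\Psi_q(Y_{\e_t c_t})^{\e_t}.
\end{align*}

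Now I take the root-of-unity limit $q = e^{-\tau/(2N^2)}\zeta$, $\tau \to 0$. Using the asymptotic $\Psi_q(x)^\e = R_{\tau,N}(x)^{-\e}\,d_N(x)^{-\e}(1+\mathcal{O}(\tau))$ implied by \eqref{eq.qdilog_root_limit} and the commutativity of $d_N(x)$ and $R_{\tau,N}(x)$ (both are functions of the single operator $x$), each paired factor simplifies: $d_N(Y_{\e_t c_t})^{\e_t}\Psi_q(Y_{\e_t c_t})^{\e_t} = R_{\tau,N}(Y_{\e_t c_t})^{-\e_t}(1+\mathcal{O}(\tau))$. The product collapses to
\begin{align*}
\lim_{\tau \to 0}\; R_{\tau,N}(Y_{\e_1 c_1})^{-\e_1} \cdots R_{\tau,N}(Y_{\e_L c_L})^{-\e_L},
\end{align*}
which is the inverse of the expression in Proposition \ref{R-tropical} (after identifying $Y'_i := (-Y_i)^N$ as the effective quantum variables at $q'=e^{-\tau/2}$, exactly as at the end of the proof of Theorem \ref{cyclic_dilog_id}). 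Hence this limit equals $1$, which is the desired identity.

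The main subtlety to verify will be that the shuffle identity interacts correctly with the functional calculus for $d_N$ throughout the root-of-unity limit. Since at generic $q$ every $\Psi_q$-conjugation is a well-defined inner automorphism of the quantum torus and $d_N$ is a fixed rational function of its argument (depending only on the root of unity $\zeta$, not on $q$), the identity $d_N(Y_{k_t}(t)^{\e_t})^{\e_t} = \mathrm{Ad}[A_t](d_N(Y_{\e_t c_t})^{\e_t})$ holds at finite $q$ and survives the $\tau \to 0$ limit by continuity, so this step should be routine.
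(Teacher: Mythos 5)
Your proof is correct at the formal level of rigor the paper itself adopts, but it takes a genuinely different route from the paper's. The paper derives the standard universal form from the dual universal form (Theorem \ref{cyclic_dilog_id}): it proves a conjugation lemma showing that $\mathrm{Ad}[d_N(Y_i)]$ replaces the ``non-tropical'' factor in the mutation of $Y_j$ by its dual counterpart, deduces that conjugation by the partial products of $d_N(Y_{k_s}(s)^{\e_s})^{\e_s}$ maps $Y_{k_t}(t)$ to the cyclic variable $\cY_{k_t}(t)$, and then reruns the shuffling argument of Theorem \ref{qdilog_id_universal}. You bypass the cyclic $y$-variables and Theorem \ref{cyclic_dilog_id} entirely: the telescoping $\prod_t A_t\, d_N(Y_{\e_t c_t})^{\e_t} A_t^{-1}=\prod_t d_N(Y_{\e_t c_t})^{\e_t}\Psi_q(Y_{\e_t c_t})^{\e_t}$, which uses \eqref{shuffle_id} together with the crucial input $A_L^{-1}=\Psi_q(Y_{\e_L c_L})^{\e_L}$ coming from the tropical identity, pairs each $d_N$ with the $\Psi_q$ of the same argument, so the asymptotics \eqref{eq.qdilog_root_limit} cancel the $d_N$'s factor by factor and reduce everything to a product of $R_{\tau,N}$'s killed by Proposition \ref{R-tropical}. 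What your approach buys is economy (no conjugation lemma for $d_N$, no dual universal form); what it loses is the structural statement $\mathrm{Ad}\bigl[d_N(Y_{k_1}(1)^{\e_1})^{\e_1}\cdots\bigr](Y_{k_t}(t))=\cY_{k_t}(t)$, which is the conceptual content of the paper's proof since it exhibits $d_N$ as the intertwiner between standard and cyclic $y$-variables. Two points you should make explicit, though neither is a gap beyond what the paper itself tolerates: (i) the telescoping is performed at $q=e^{-\tau/2N^2}\zeta$, and the left-hand side converges to the claimed expression at $q=\zeta$ by continuity of $d_N$ and of the mutation rule in $q$; (ii) the $(1+\mathcal{O}(\tau))$ errors must be commuted past the individually divergent $R_{\tau,N}$ factors before taking $\tau\to 0$, which is legitimate only because $\mathrm{Ad}[R_{\tau,N}]$ has a finite limit (Lemma \ref{com}) --- the same implicit step appears at the end of the paper's proof of Theorem \ref{cyclic_dilog_id}.
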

\begin{proof} We start from the cyclic dilogarithm identity of \eqref{cyclic_dilog_id_eq},
we can mimic the arguments of the shuffle formula. First of all, we note that $d(Y_i)$ instead of conjugating tropical form to universal form, actually interchange universal form for $\cY_i$ and that for the standard $Y_i$ variables.

\begin{Lem} We have for $b_{ji}\geq 0$
\Eq{d_N(Y_i)\cdot Y_j \cdot d_N(Y_i)\inv=Y_j\frac{(1+\til{q}Y_i^N)^{\frac{b_{ji}}{N}}}{\prod_{k=1}^{b_{ji}}(1+q^{2k-1}Y_i)},}
and for $b_{ji}<0$
\Eq{d_N(Y_i)\cdot Y_j \cdot d_N(Y_i)\inv=Y_j\frac{(1+\til{q}Y_i^{N})^{\frac{b_{ji}}{N}}}{\prod_{k=1}^{|b_{ji}|}(1+q^{-(2k-1)}Y_i)\inv}.}
\end{Lem}
\begin{proof}
First assume $b_{ji}>0$. We then have
\Eqn{
d_N(Y_i)\cdot Y_j \cdot d_N(Y_i)\inv&=Y_j \frac{d_N(q^{2b_{ji}}Y_i)}{d_N(Y_i)}\\
&=Y_j \frac{\prod_{k=1}^{N-1}(1+q^{2k+1}Y_i)^{\frac{k}{N}}}{\prod_{k=1}^{N-1}(1+q^{2k+1+2b_{ji}}Y_i)^{\frac{k}{N}}}\\
&=Y_j\frac{\prod_{k=1}^{N-1}(1+q^{2k+1}Y_i)^{\frac{k}{N}}}{\prod_{k=1+b_{ji}}^{N-1+b_{ji}}(1+q^{2k+1}Y_i)^{\frac{k-b_{ji}}{N}}}\\
&=Y_j\prod_{k=1}^{b_{ji}}(1+q^{2k+1}Y_i)^{\frac{k}{N}}\prod_{k=1+b_{ji}}^{N-1}(1+q^{2k+1}Y_i)^{\frac{b_{ji}}{N}}\prod_{k=N}^{N-1+b_{ji}}(1+q^{2k+1}Y_i)^{\frac{b_{ji}-k}{N}}\\
&=Y_j\prod_{k=1}^{b_{ji}}(1+q^{2k+1}Y_i)^{\frac{k}{N}}\prod_{k=1+b_{ji}}^{N-1}(1+q^{2k+1}Y_i)^{\frac{b_{ji}}{N}}\prod_{k=0}^{b_{ji}-1}(1+q^{2k+1}Y_i)^{-1+\frac{b_{ji}-k}{N}}.
}
Now note that
\Eqn{\prod_{k=1+b_{ji}}^{N-1}(1+q^{2k+1}Y_i)^{\frac{b_{ji}}{N}}
&=\frac{\prod_{k=0}^{N-1}(1+q^{2k+1}Y_i)^{\frac{b_{ji}}{N}}}{\prod_{k=0}^{b_{ji}}(1+q^{2k+1}Y_i)^{\frac{b_{ji}}{N}}}\\
&=\frac{(1+\til{q}Y_i^N)^{\frac{b_{ji}}{N}}}{\prod_{k=0}^{b_{ji}}(1+q^{2k+1}Y_i)^{\frac{b_{ji}}{N}}}.
}
Hence combining, most of the terms cancel out, and we are left with
\Eqn{
d_N(Y_i)\cdot Y_j \cdot d_N(Y_i)\inv&=Y_j\frac{(1+\til{q}Y_i^N)^{\frac{b_{ji}}{N}}}{\prod_{k=0}^{b_{ji}-1}(1+q^{2k+1}Y_i)}\\
&=Y_j\frac{(1+\til{q}Y_i^N)^{\frac{b_{ji}}{N}}}{\prod_{k=1}^{b_{ji}}(1+q^{2k-1}Y_i)} , 
}
which is precisely the ratio of the ``non-tropical terms".
The case for $b_{ji}<0$ is similar.
\end{proof}

Therefore conjugation by $d_N$ will replace the non-tropical terms with its dual version. Now note that the non-tropical terms for $Y_i$ commute with $\cY_i$ and vice versa. Hence the only arguments that matter in the subsequent conjugations are from the tropical terms only. So the adjoint action follows from the shuffling formula as in the case of the $R$ function, but instead, with the tropical term replaced by the universal form of the dual variables.

In particular, we have
\Eqn{\textrm{Ad}\left[d_N(Y_{k_1}(1)^{\e_1})^{\e_1}...d_N(Y_{k_{t-1}}(t-1)^{\e_{t-1}})^{\e_{t-1}}\right](Y_{k_t}(t)) = \cY_{k_t}(t).
}
The rest of the proof is parallel to the proof of Theorem \ref{qdilog_id_universal}, see in particular \eqref{shuffle_id} and \eqref{shuffle_demonstration}.
\end{proof}

\begin{Rem}
The commutation relations \eqref{cyclic_Y_comm} and \eqref{cyclic_Y_comm_2}
has a finite-dimensional matrix representation, and consequently
the cyclic dilogarithm identity also reduces to a matrix identity.
For example, $Y_1 Y_2=\zeta^2 Y_2 Y_1$ (see Example \ref{A2_example_cyclic} below) can be represented by a  
$N\times N$ matrix
\begin{align}
(Y_1)_{ab}=\zeta^{2a} \delta_{ab}, \quad
(Y_2)_{ab}=\delta_{a, b+1},
\end{align}
where the indices $a,b$ are to be understood modulo $N$.
\end{Rem}

\subsection{Examples}

For concreteness let us now discuss two examples.

\begin{Ex}[$A_2$ quiver]\label{A2_example_cyclic}

Let us consider the mutation sequence discussed in Example \ref{A2_example_qdilog}.
The identity in the dual universal form \eqref{cyclic_dilog_id} reads
\begin{align}
\begin{split}
&d_N\left(Y_2(1+\til{\zeta} \til{Y}_1)^\frac{1}{N}\right) d_N(Y_1) \\
&\quad= d_N\left(\frac{Y_1}{(1+\til{\zeta} \til{Y}_2+\til{Y}_1 \til{Y}_2)^{\frac{1}{N}}}\right)d_N\left(\frac{\zeta Y_2 Y_1}{(1+\til{\zeta} \til{Y}_1)^\frac{1}{N}}\right)d_N(Y_2)
\end{split}
\end{align}
for $Y_1 Y_2=\zeta^2 Y_2 Y_1$.
One can verify that this coincides with the identity derived in \cite{BazhanovR}.
The identity in the standard form \eqref{cyclic_id_standard_eq} reads
\begin{align} 
\begin{split}
&d_N(Y_1) d_N(Y_2(1+qY_1)) d_N\left((1+qY_2+Y_1Y_2)\inv Y_1\right)\inv  \\
&\qquad d_N\left(q(1+qY_2)\inv Y_2Y_1\right)\inv d_N(Y_2)\inv=1.
\end{split}
\end{align}

\end{Ex}

\begin{Ex}[$A_3$ quiver]\label{A3_example_cyclic}

For the mutation sequence for the $A_3$ quiver considered in Example \ref{A3_example_qdilog}, 
we have, for $Y_1 Y_2=\zeta^2 Y_2 Y_1, Y_3 Y_2=\zeta^2 Y_3 Y_2$,

The identity in the dual universal form \eqref{cyclic_dilog_id} reads
\begin{align}
\begin{split}
&d_N(Y_2\til{X}_{\til{\zeta}} )d_N(Y_3) d_N(Y_1)\\
&=d_N\left(Y_1(1+\til{\zeta} \til{Y}_2 \til{X}_{\til{\zeta}})^{-\frac{1}{N}}\right)
d_N\left(Y_3(1+\til{\zeta} \til{Y}_2\til{X}_{\til{\zeta}})^{-\frac{1}{N}}\right) \\
&\qquad d_N\left(Y_1Y_2Y_3(1+\til{\zeta}\til{Y}_2+\til{Y}_1 \til{Y}_2)^{-\frac{1}{N}}
 (1+\til{\zeta} \til{Y}_2+\til{Y}_3 \til{Y}_2)^{-\frac{1}{N}} \right) \\
&\qquad d_N\left(\zeta Y_2 Y_3(1+\til{\zeta}\til{Y}_2)^{-\frac{1}{N}}\right) d_N\left(\zeta Y_2 Y_1 (1+\til{\zeta}\til{Y}_2)^{-\frac{1}{N}}\right) d_N(Y_2),
\end{split}
\end{align}
where $\til{X}_{\til{\zeta}}:=(1+\til{\zeta} \til{Y}_1)^{\frac{1}{N}} (1+\til{\zeta} \til{Y}_3)^{\frac{1}{N}}$,
while the identity in the standard form \eqref{cyclic_id_standard_eq} is given by
\begin{align}
\begin{split}
&d_N(Y_1)   d_N(Y_3)  d_N(Y_2(1+qY_1)(1+qY_3))\\
&= d_N(Y_2)
 d_N(q (1+qY_2)\inv Y_1 Y_2)
  d_N(q(1+qY_2)\inv Y_2 Y_3) \\
& \qquad d_N( (1+qY_2+Y_1Y_2)\inv (1+q^3Y_2+Y_3Y_2)\inv Y_3Y_2Y_1) \\
& \qquad d_N((1+qY_2X_q)\inv Y_3)
d_N((1+qY_2 X_q)\inv Y_1).
\end{split}
\end{align}
with $X_q$ as in \eqref{Xq_def}.

\end{Ex}

\section{Cyclic Dilogarithm Identities (II)}\label{sec.cyclic2}

In this section we consider the root of unity limit of the 
quantum dilogarithm identities for $\Phi_b(x)$ and $g_b(x)$.

\subsection{Root of Unity Limit of $g_b(x)$}

Let $q=e^{\pi i\frac{M}{N}}$ where $M, N$ are coprime, such that $b=\sqrt{\frac{M}{N}}$. It turns out the integral expression for $\Phi_b(x)$ can be evaluated explicitly, which is discussed in a recent preprint \cite{Garoufalidis:2014ifa}. Let $Q=b+b\inv$.\footnote{In \cite{Garoufalidis:2014ifa} they denote by $c_b = \frac{iQ}{2}$.}

\begin{Prop}\label{Phi_cyclic}
Let $s=\sqrt{MN}$. Then
\begin{align}
\Phi_b\left(\frac{z}{2\pi s}-\frac{iQ}{2}\right) = \frac{e^{\frac{i}{2\pi s^2}\Li_2(e^z)}(1-e^z)^{1+\frac{iz}{2\pi s^2}}}{D_N(e^{z/N};q^2)D_M(e^{z/M};\til{q}^{-2})},
\end{align}
where the (Kashaev's) cyclic dilogarithm $D_N(x;q)$ is defined as
\Eq{
D_N(x;q):=\prod_{k=1}^{N-1}(1-q^kx)^{k/N}.
}
\end{Prop}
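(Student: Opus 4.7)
The plan is to evaluate $\log \Phi_b(x)$ directly from its integral representation by closing the contour and summing residues. At the root-of-unity value $b^2 = M/N$, the poles of $\sinh(bt)^{-1}$ (at $t = i\pi k/b$) and of $\sinh(b^{-1}t)^{-1}$ (at $t = i\pi k b$) coincide precisely at $t = i\pi \ell s$ for $\ell \in \bZ$ (since $\gcd(M,N)=1$), producing double poles; all other poles remain simple. I expect that the double poles will yield the $\Li_2(e^z)$ term together with the $\frac{iz\log(1-e^z)}{2\pi s^2}$ piece, while the simple poles, organized separately modulo $N$ and modulo $M$, will reproduce the two cyclic dilogarithm factors $D_N$ and $D_M$; the $(1-e^z)^1$ factor will arise from combining logarithmic remainders from all three classes of poles.

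First, substitute $x = \frac{z}{2\pi s} - \frac{iQ}{2}$ into the defining integral and rescale $u = t/s$. Using $bt = Mu$, $b^{-1}t = Nu$ (from $b = \sqrt{M/N}$, $s=\sqrt{MN}$) and $-2ixt = -izu/\pi - (M+N)u$ (from $bQ = b^2+1$, $b^{-1}Q = b^{-2}+1$), the integral becomes
\begin{equation*}
\log \Phi_b(x) = \int_{\bR+i0} \frac{e^{-izu/\pi - (M+N)u}}{4\sinh(Mu)\sinh(Nu)\,u}\,du.
\end{equation*}
The integrand has simple poles at $u = i\pi k/N$ with $N\nmid k$, simple poles at $u = i\pi k/M$ with $M\nmid k$, and double poles at $u = i\pi\ell$ for $\ell \in \bZ\setminus\{0\}$; the triple pole at $u = 0$ lies on the real axis below the contour. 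Choosing $z$ in a suitable region (for instance $\mathrm{Re}(z) < 0$ and $\mathrm{Im}(z) > 0$, so that the full integrand decays on arcs at infinity), close the contour in the upper half-plane and apply the residue theorem.

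Next, compute the residues. At $u = i\pi k/N$, using $\sin(\pi kM/N) = (q^k - q^{-k})/(2i)$, the residue evaluates to $\frac{i e^{zk/N}}{2\pi k(1-q^{2k})}$; summing over $k \geq 1$ with $N\nmid k$, multiplying by $2\pi i$, and matching the resulting power series with $\log D_N(e^{z/N}; q^2)$ via the elementary identity $\sum_{j=1}^{N-1} j\omega^j = N/(\omega-1)$ valid for $\omega^N = 1$, $\omega \neq 1$, yields $-\log D_N(e^{z/N};q^2) + \frac{N-1}{2N}\log(1-e^z)$. The parallel computation at $u = i\pi k/M$ (via the symmetry $M\leftrightarrow N$, $q \leftrightarrow \til q^{-1}$) produces $-\log D_M(e^{z/M};\til q^{-2}) + \frac{M-1}{2M}\log(1-e^z)$. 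For the double poles at $u = i\pi\ell$, expand $\sinh(M(u-i\pi\ell))\sinh(N(u-i\pi\ell)) = MN(u-i\pi\ell)^2 + O((u-i\pi\ell)^4)$, take the derivative residue, and collapse the $\ell$-sum via $\sum_{\ell\geq 1} e^{z\ell}/\ell^2 = \Li_2(e^z)$ and $\sum_{\ell\geq 1} e^{z\ell}/\ell = -\log(1-e^z)$; this gives $\frac{i\Li_2(e^z)}{2\pi s^2} + \frac{iz\log(1-e^z)}{2\pi s^2} + \frac{Q\log(1-e^z)}{2s}$.

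Assembling the three contributions and using $Qs = M+N$ with $s^2 = MN$, the total coefficient of $\log(1-e^z)$ reduces to $\frac{N-1}{2N} + \frac{M-1}{2M} + \frac{Q}{2s} = 1$, so exponentiation reproduces the stated identity. The main technical obstacle is the arc-at-infinity estimate: along $\mathrm{Re}(u) \to +\infty$ the integrand decays like $e^{-2(M+N)\mathrm{Re}(u)}$ because $|\sinh(Mu)\sinh(Nu)|^{-1}$ decays like $e^{-(M+N)|\mathrm{Re}(u)|}$, but along $\mathrm{Re}(u) \to -\infty$ the combination $(\sinh(Mu)\sinh(Nu))^{-1}\cdot e^{-(M+N)u}$ is only bounded, so the necessary decay must be supplied by $e^{-izu/\pi}$; this restricts the range of $z$ for which the contour closure is direct, with the general identity following by analytic continuation in $z$.
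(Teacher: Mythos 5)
Your residue-theoretic route is genuinely different from anything in the paper: the paper does not prove Proposition \ref{Phi_cyclic} at all (it defers to \cite{Garoufalidis:2014ifa}), and the closest it comes is the Appendix, which establishes only the $M=1$ special case \eqref{Phi_M1} by degenerating the infinite product $(x;q^2)_\infty/(\til{x};\til{q}^{-2})_\infty$ as $\tau\to 0$ via Euler--Maclaurin and the $R^*_\tau$ functions. Your approach---closing the contour of the defining integral at $b^2=M/N$ and organizing the poles into the two simple families and the double family---is essentially the method of the cited reference, proves the full $M,N$ statement in one stroke, and its main steps check out: the rescaling $u=t/s$, the pole structure, the residue $\frac{ie^{zk/N}}{2\pi k(1-q^{2k})}$ at $u=i\pi k/N$, the resummation via $\sum_{j=1}^{N-1}j\omega^j=N/(\omega-1)$ giving $-\log D_N(e^{z/N};q^2)+\frac{N-1}{2N}\log(1-e^z)$, the double-pole contribution $\frac{i\Li_2(e^z)}{2\pi s^2}+\frac{iz\log(1-e^z)}{2\pi s^2}+(\frac{1}{2M}+\frac{1}{2N})\log(1-e^z)$, and the final tally $\frac{N-1}{2N}+\frac{M-1}{2M}+\frac{Q}{2s}=1$ are all correct. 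The contour-closure region and the appeal to analytic continuation are also appropriately identified (though you should note that all poles lie on the imaginary axis, so the closing arcs must be chosen to avoid them, e.g.\ at radii $\pi(n+\tfrac{1}{2MN})$).

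The one substantive problem is the $M$-pole family. The integrand $e^{-izu/\pi-(M+N)u}/(4\sinh(Mu)\sinh(Nu)u)$ is symmetric under $M\leftrightarrow N$, and this symmetry sends $q=e^{i\pi M/N}$ to $e^{i\pi N/M}$, which in \emph{this paper's} convention is $\til{q}$, not $\til{q}^{-1}$. A direct computation confirms this: at $u_0=i\pi k/M$ one finds $2\pi i\,\mathrm{Res}=-e^{zk/M}/(k(1-\til{q}^{2k}))$, and the resummation produces $-\log D_M(e^{z/M};\til{q}^{2})+\frac{M-1}{2M}\log(1-e^z)$, i.e.\ the root of unity $e^{2\pi iN/M}=\til{q}^{2}$ rather than $\til{q}^{-2}=e^{-2\pi iN/M}$. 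Since $D_M(x;\til{q}^2)$ and $D_M(x;\til{q}^{-2})$ are genuinely different functions (their product is $(1-x^M)/(1-x)$), your asserted substitution rule ``$q\leftrightarrow\til{q}^{-1}$'' does not follow from the symmetry you invoke and cannot be waved through. The discrepancy almost certainly traces to a convention clash with \cite{Garoufalidis:2014ifa}, where $\til{q}$ is defined as $e^{-i\pi b^{-2}}$, so that their $\til{q}^{-2}$ equals this paper's $\til{q}^{2}$; with the present paper's definition $\til{q}=e^{i\pi b^{-2}}$ the statement should read $D_M(e^{z/M};\til{q}^{2})$. (Nothing downstream in the paper is affected, since only the $M=1$ case, where $D_M\equiv 1$, is ever used.) You should either carry out the $M$-pole computation explicitly and state the result you actually obtain, or flag the convention dependence; as written, the step that is supposed to produce the $\til{q}^{-2}$ of the statement is the one step of your argument that is not justified.
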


In particular, for $M=N=1$, we have\footnote{
This identity is well-known in the literature of supersymmetric gauge theories \cite{Jafferis:2010un}.
}
\begin{align}
\Phi_1(z) = e^{\frac{i}{2\pi}\Li_2(e^{2\pi z})}(1-e^{2\pi z})^{iz}.
\end{align}
For $M=1$ but $N$ general, we have
\begin{align}
\begin{split}
\Phi_\frac{1}{\sqrt{N}}\left(z-\frac{iQ}{2}\right) &= \frac{e^{\frac{i}{2\pi s^2}\Li_2(e^{2\pi sz})}(1-e^{2\pi sz})^{1+\frac{iz}{s}}}{D_N(e^{2\pi sz/N};q^2)}\\
&= \frac{(1-e^{2\pi sz})\Phi_1(sz)^{\frac{1}{s^2}}}{D_N(e^{2\pi sz/N};q^2)}.
\label{Phi_M1}
\end{split}
\end{align}

The proof of Proposition \ref{Phi_cyclic} can be found in \cite{Garoufalidis:2014ifa}. We observe from \eqref{Phi_M1} that the quantum dilogarithm function at $b=1$ plays a special role in the case of root of unity. Using the results from previous analysis of the cyclic dilogarithm, we derive new identities involving this special $\Phi_1(x)$ function. In the appendix we will present a new proof of \eqref{Phi_M1} based on the $\tau$ analysis.

\subsection{Identities for $\newg_{1,N}(u)$}
Let $u=e^{2\pi bz}$. Note that in \eqref{Phi_M1}, 
\Eq{\Phi_1(sz) = g_1(u^N).}
Hence we can rewrite  \eqref{Phi_M1} using $g_b(u)$ as
\Eq{g_\frac{1}{\sqrt{N}}(e^{-\pi i}q\inv u) = \frac{(1-u^N)g_1(u^N)^{\frac{1}{N}}}{D_N(u;q^2)}.}
(Note we need to keep track of $-1=e^{\pm \pi i}$ since $g_b(u)$ depends on $\log(u)$.)
Hence replacing $u\mapsto e^{\pi i}qu$, we have
\Eq{\label{g_root_expression}g_{\frac{1}{\sqrt{N}}}(u) = \frac{(1+\til{q}u^N)g_1(e^{\pi i}\til{q}u^N)^{\frac{1}{N}}}{D_N(-qu;q^2)}.}

Let us denote by
\Eq{
\newg_{1,N}(u):=\widehat{g}_1(u^N)^{\frac{1}{N}}(1+\til{q}u^N)^{\frac{1+N}{2N}}, \quad \widehat{g}_1(u):=g_1(e^{\pi i}\til{q} u),
}
Then
\Eq{g_\frac{1}{\sqrt{N}}(u) = d_N(u)\newg_{1,N}(u),}
where $d_N(u)$ is our cyclic dilogarithm defined in Definition \ref{cyclicdilog}. We see that the function $\newg_{1,N}(u)$ plays the role of $R_{\tau, N}(x)$ in our previous analysis. In fact we have

\begin{Thm}\label{g1_identity}
$\newg_{1,N}(u)$ satisfies the quantum dilogarithm identities in the tropical form:
\begin{align}
\newg_{1,N}(Y_{\e_1 c_1})^{\e_1} \cdots
\newg_{1,N}(Y_{\e_L c_L})^{\e_L} =1.
 \label{cyclic_g_id}
\end{align}
\end{Thm}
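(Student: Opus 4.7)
The strategy is to apply the factorization $g_{1/\sqrt{N}}(u) = d_N(u)\,\newg_{1,N}(u)$ derived just before the theorem to the tropical form of the non-compact quantum dilogarithm identity \eqref{gb_id_tropical} at $b = 1/\sqrt{N}$, in analogy with the proof of Theorem \ref{cyclic_dilog_id} but with $\newg_{1,N}$ playing the role of $R_{\tau,N}$. The key observation is that $\newg_{1,N}(u) = \widehat{g}_1(u^N)^{1/N}(1+\tilde{\zeta}u^N)^{(N+1)/(2N)}$ depends only on $u^N$, and at the root of unity $q = \zeta$ we have $q^{2N}=1$, so $Y_\alpha^N$ is a Casimir of the quantum torus algebra, commuting with every $Y_\beta$. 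Consequently each $\newg_{1,N}(Y_\alpha)$ is genuinely central---not merely asymptotically so as $\tau \to 0$---and commutes with every $d_N(Y_\beta)$ and every other $\newg_{1,N}(Y_\beta)$.

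Substituting the factorization into \eqref{gb_id_tropical} and using centrality to commute all $\newg_{1,N}$-factors freely to one side \emph{without} modifying any $d_N$-argument (in contrast with the proof of Theorem \ref{cyclic_dilog_id}, where the $R_{\tau,N}$-shuffling converts the tropical arguments to the cyclic $y$-variables $\cY_{k_t}(t)$), one obtains
$$\left[\prod_{t=L}^{1}d_N(Y_{\e_t c_t})^{\e_t}\right]\cdot\left[\prod_{t=1}^{L}\newg_{1,N}(Y_{\e_t c_t})^{\e_t}\right]=1,$$
where the ordering of the $\newg_{1,N}$-product is immaterial since its factors mutually commute.

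The proof is then completed by establishing the tropical form of the cyclic dilogarithm identity, $\prod_{t=L}^{1}d_N(Y_{\e_t c_t})^{\e_t}=1$, which for $A_2$ reduces to the pentagon identity of BazhanovR. This is the main technical obstacle. I would derive it by yet another variant of the argument in the proof of Theorem \ref{cyclic_dilog_id}: start from the tropical $\Psi_q$-identity \eqref{eq.qdilog_tropical} together with the BR expansion \eqref{eq.qdilog_root_limit}, and shuffle the $R_{\tau,N}$-factors out of the product while carefully tracking the central correction factors $(1+\tilde{\zeta}Y_\alpha^N)^{\ast}$ from Lemma \ref{com}. The ordered product of $R_{\tau,N}$'s tends to $1$ as $\tau\to 0$ by Proposition \ref{R-tropical}, and the correction factors should combine to a trivial central modification thanks to the sign-coherence of the tropical $c$-vectors. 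Combining with the factored identity above then yields $\prod_{t=1}^{L}\newg_{1,N}(Y_{\e_t c_t})^{\e_t}=1$, as desired.
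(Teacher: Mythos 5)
Your proposal founders on the centrality claim, and this is not a repairable detail---it is the crux of the theorem. You argue that since $\newg_{1,N}(Y_\alpha)$ depends only on $Y_\alpha^N$, which is a Casimir of the quantum torus at $q=\zeta$, each $\newg_{1,N}(Y_\alpha)$ commutes with everything. But $\what{g}_1(u)$ is built from $g_1(x)=e^{\frac{i}{2\pi}\Li_2(x)}(1-x)^{\frac{\log x}{2\pi i}}$, which carries $\log u$ in an exponent. Conjugating by $Y_j$ replaces $\log(Y_i^N)$ by $\log(q^{2Nb_{ji}}Y_i^N)=\log(Y_i^N)+2\pi i\,b_{ji}$: even though $q^{2N}=1$ as a number, its logarithm is $2\pi i\,b_{ji}\neq 0$, and the paper's Lemma inside the proof of Theorem \ref{g1_identity} computes the resulting non-trivial conjugation
$\newg_{1,N}(Y_i)\, Y_j \,\newg_{1,N}(Y_i)\inv = Y_j(1+\til{q}Y_i^N)^{-b_{ji}/N}$.
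This is the exact analogue of the Remark following Lemma \ref{com}: $Y_i^N$ commutes with $Y_j$, yet the relevant \emph{function} of $Y_i^N$ does not, because a divergent (here: branch-sensitive) piece survives. So the $\newg_{1,N}$-factors cannot be moved past the $d_N$-factors for free.

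The second error is downstream of the first: the ``tropical form of the cyclic dilogarithm identity'' $\prod_{t=L}^{1}d_N(Y_{\e_t c_t})^{\e_t}=1$ that your reduction requires is not proved in the paper and is false in general. The whole point of Theorem \ref{cyclic_dilog_id} is that shuffling the $R_{\tau,N}$'s (or here the $\newg_{1,N}$'s) out of the product converts the tropical arguments $Y_{\e_t c_t}$ into the cyclic $y$-variables $\cY_{k_t}(t)^{\e_t}$ via precisely those $(1+\til{q}Y^N)^{\pm b/N}$ correction factors; they do not ``combine to a trivial central modification,'' they \emph{are} the mutation rule \eqref{cyclic_y_mutate}. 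Already for $A_2$ the identity of \cite{BazhanovR} quoted in Example \ref{A2_example_cyclic} carries the factors $(1+\til{\zeta}\til{Y}_1)^{1/N}$ etc.\ and does not reduce to a product over tropical arguments. The paper's actual proof embraces the non-centrality: it shuffles all $\newg_{1,N}$-factors to the left, thereby turning the $d_N$-product into exactly the dual universal form $d_N(\cY_{k_L}(L)^{\e_L})^{\e_L}\cdots d_N(\cY_{k_1}(1)^{\e_1})^{\e_1}$, which cancels by Theorem \ref{cyclic_dilog_id}, leaving \eqref{cyclic_g_id}. You should replace your centrality argument by the conjugation lemma above and then follow that route.
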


\begin{proof}
Since $g_{\frac{1}{\sqrt{N}}}(u)$ satisfy the quantum dilogarithm identities, we have
\Eqn{d_N(Y_{\e_1c_1})^{\e_1}\newg_{1,N}(Y_{\e_1c_1})^{\e_1}\cdots d_N(Y_{\e_Lc_L})^{\e_L}\newg_{1,N}(Y_{\e_Lc_L})^{\e_L}=1.}

Now the proof follows from the following lemma:

\begin{Lem} The commutation relation for $\newg_{1,N}(Y_i)$ with $Y_j$ is exactly the same as $R_{\tau,N}(Y_i)$ in the $\tau\to 0$ limit, i.e., it shuffles as the mutation for the cyclic $y$-variables:
\Eq{\newg_{1,N}(Y_i)\cdot Y_j \cdot \newg_{1,N}(Y_i)\inv = Y_j(1+\til{q}Y_i^N)^{-\frac{b_{ji}}{N}}.}
\end{Lem}
\begin{proof} We only need to consider $\widehat{g}_1$ since the extra factor commutes with everything. We have
\Eqn{
\widehat{g}_1(Y_i^N) Y_j &= e^{\frac{i}{2\pi}\Li_2(Y_i^N)}(1+\til{q}Y_i^N)^{-\frac{\log Y_i^N}{2\pi i}}Y_j\\
&=e^{\frac{i}{2\pi}\Li_2(Y_i^N)}Y_j(1+\til{q}v^N)^{-\frac{\log q^{2b_{ji}N} Y_i^N}{2\pi i}}\\
&=Y_je^{\frac{i}{2\pi}\Li_2(Y_i^N)}(1+\til{q}Y_i^N)^{-b_{ji}-\frac{\log Y_i^N}{2\pi i}}\\
&=Y_j(1+\til{q}Y_i^N)^{-b_{ji}} \widehat{g}_1(Y_i^N) ,
}
and hence
$$
\widehat{g}_1(Y_i^N)^{\frac{1}{N}}\cdot Y_j \cdot  \widehat{g}_1(Y_i^N)^{-\frac{1}{N}} =Y_j(1+\til{q}Y_i^N)^{-\frac{b_{ji}}{N}}.
$$
\end{proof}

We therefore obtain, after repetition of shuffling,
\begin{align*}
\newg_{1,N}(Y_{\e_1 c_1})^{\e_1} \cdots
\newg_{1,N}(Y_{\e_L c_L})^{\e_L} 
d_N(\cY_{k_L}(L)^{\e_L})^{\e_L}  \cdots 
d_N(\cY_{k_1}(1)^{\e_1})^{\e_1}=1.
\end{align*}
Since the product of $d_N$'s cancel thanks to Theorem \ref{cyclic_dilog_id},
we obtain \eqref{cyclic_g_id}.
\end{proof}

From the Lemma we see that $\newg_{1,N}(u)$ satisfies the shuffle relations for the cyclic 
$y$-variables, and consequently
we can derive
\begin{Cor}
$\newg_{1,N}(u)$ also satisfies the quantum dilogarithm identities in the dual universal form:
\Eq{\newg_{1,N}(\cY_{k_L}(L)^{\e_L})^{\e_L}\cdots \newg_{1,N}(\cY_{k_1}(1)^{\e_1})^{\e_1}=1.
\label{g_uN_universal}}
\end{Cor}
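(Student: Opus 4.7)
The plan is to convert the tropical identity of Theorem \ref{g1_identity},
\[
\newg_{1,N}(Y_{\e_1 c_1})^{\e_1} \cdots \newg_{1,N}(Y_{\e_L c_L})^{\e_L} = 1,
\]
into the dual universal form by iteratively shuffling the rightmost factor to the leftmost position, exactly mirroring the passage from Theorem \ref{qdilog_tropical} to Theorem \ref{qdilog_id_universal}.

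The essential input is already contained inside the proof of Theorem \ref{g1_identity}: conjugation by $\newg_{1,N}(Y_i)^{\e}$ acts on the initial variables $Y_j$ in precisely the same manner as the cyclic $y$-variable mutation \eqref{cyclic_y_mutate}. Iterating this observation along the mutation sequence $(k_1,\ldots,k_L)$ furnishes the cyclic analog of \eqref{shuffle_id},
\[
\cY_{k_t}(t)^{\e_t} = \mathrm{Ad}\!\left[\newg_{1,N}(Y_{\e_1 c_1})^{\e_1} \cdots \newg_{1,N}(Y_{\e_{t-1} c_{t-1}})^{\e_{t-1}}\right]\!\left(Y_{\e_t c_t}\right),
\]
valid for an arbitrary tropical sign sequence. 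This immediately yields the shuffle rule
\begin{align*}
& \newg_{1,N}(Y_{\e_1 c_1})^{\e_1} \cdots \newg_{1,N}(Y_{\e_{t-1} c_{t-1}})^{\e_{t-1}} \newg_{1,N}(Y_{\e_t c_t})^{\e_t} \\
& \quad = \newg_{1,N}(\cY_{k_t}(t)^{\e_t})^{\e_t} \newg_{1,N}(Y_{\e_1 c_1})^{\e_1} \cdots \newg_{1,N}(Y_{\e_{t-1} c_{t-1}})^{\e_{t-1}},
\end{align*}
which is the direct analog of \eqref{shuffle_demonstration}.

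Applying this shuffle first with $t=L$ (so that the $L$-th factor migrates to the leftmost position, its argument transforming into $\cY_{k_L}(L)^{\e_L}$), then with $t=L-1$ on the new rightmost factor, and continuing downward, after $L$ iterations the product becomes
\[
\newg_{1,N}(\cY_{k_L}(L)^{\e_L})^{\e_L} \cdots \newg_{1,N}(\cY_{k_1}(1)^{\e_1})^{\e_1} = 1,
\]
which is precisely \eqref{g_uN_universal}. Since every ingredient—the tropical identity, the commutation lemma, and the shuffle mechanism—has already been established, no genuine obstacle arises; the only care required is bookkeeping of tropical signs and the order in which factors are moved, which is handled in exactly the same manner as in the parallel arguments of Theorems \ref{cyclic_dilog_id} and \ref{cyclic_id_standard}.
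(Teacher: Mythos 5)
Your overall strategy --- converting the tropical identity of Theorem \ref{g1_identity} into the dual universal form by a shuffle argument based on the conjugation Lemma --- is exactly the route the paper takes (its own justification is the single remark that $\newg_{1,N}$ ``satisfies the shuffle relations for the cyclic $y$-variables''). However, your explicit shuffle formula has the conjugation in the wrong direction, and this is not a harmless convention. The Lemma gives
\begin{equation*}
\newg_{1,N}(Y_i)\, Y_j\, \newg_{1,N}(Y_i)^{-1} = Y_j\,\bigl(1+\til{q}\,Y_i^N\bigr)^{-\frac{b_{ji}}{N}},
\end{equation*}
whose exponent $-b_{ji}/N$ is the \emph{opposite} of the cyclic mutation rule \eqref{cyclic_y_mutate}, which produces $(1+\til{\zeta}\cY_k^N)^{+\frac{b_{ik}}{N}}$. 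So it is $\mathrm{Ad}\bigl[\newg_{1,N}(Y_i)^{-1}\bigr]$, not $\mathrm{Ad}\bigl[\newg_{1,N}(Y_i)\bigr]$, that implements the cyclic mutation; this is precisely why the paper writes $\mathrm{Ad}^{-1}$ in the analogous formula for $R_{\tau,N}$ in the proof of Theorem \ref{cyclic_dilog_id}, in contrast with the plain $\mathrm{Ad}$ of \eqref{shuffle_id} for $\Psi_q$ (the inversion $\Psi_q^{-1}=R_{\tau,N}\,d_N$ accounts for the flip). Concretely, for the $A_2$ sequence at $t=2$ your displayed identity asserts $\cY_2(2)=\mathrm{Ad}[\newg_{1,N}(Y_1)](Y_2)=Y_2(1+\til{q}\til{Y}_1)^{-1/N}$, whereas $\cY_2(2)=Y_2(1+\til{\zeta}\til{Y}_1)^{+1/N}$; so the cyclic analogue of \eqref{shuffle_id} that you state is already false at the first nontrivial step.

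This matters because the telescoping you perform, $\hat G_L\cdots\hat G_1=(A_LA_{L-1}^{-1})\cdots(A_1A_0^{-1})=A_L$ with $A_t=G_1\cdots G_t$ and $G_t=\newg_{1,N}(Y_{\e_t c_t})^{\e_t}$, only collapses when $\hat G_t=\mathrm{Ad}[A_{t-1}](G_t)$. With the corrected relation $\cY_{k_t}(t)^{\e_t}=\mathrm{Ad}\bigl[G_1^{-1}G_2^{-1}\cdots G_{t-1}^{-1}\bigr](Y_{\e_t c_t})$ the factors that telescope are instead $C_{t-1}^{-1}C_t$ with $C_t=G_t\cdots G_1$, and the product that collapses is $\hat G_1\cdots\hat G_L=C_L=G_L\cdots G_1$, i.e.\ a different ordering of both the input tropical identity and the output universal identity from the ones you used. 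You therefore need to (i) establish the shuffle identity with the inverse conjugator, and (ii) track which ordering of Theorem \ref{g1_identity} feeds into which ordering of \eqref{g_uN_universal}; as written, the bookkeeping of directions --- the one place where this corollary requires actual care --- is exactly where the argument breaks.
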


\begin{Rem} Using the functional equations for $g_b$, one can see that equation \eqref{g_root_expression} is also equivalent to the identity for $\zeta=e^{2\pi i/N}$:
\Eq{\prod_{k=1}^{N}g_{\frac{1}{\sqrt{N}}}(\zeta^k z) = g_1(e^{\pi i}\til{q}z^N),}
which is a generalization of the classical dilogarithm identity
\Eq{N\sum_{k=1}^{N}Li_2(\zeta^k x)=Li_2(x^N). }
\end{Rem}

Finally, similarly analysis can be done for general $q=e^{\pi i\frac{M}{N}}$ with $M,N$ coprime, using the fact that the commutation relation for $\newg_{1,N}(Y_i)$ is also true for the modular double variables due to the modular duality of $g_b(x)$. Let $b=\sqrt{\frac{M}{N}}$ and $u=e^{2\pi bz}$, and denote by
\Eq{\newg_{1,N,M}(u):=g_1(e^{\pi i(M+N)} u^N)^{\frac{1}{MN}}(1+e^{\pi i(M+N)}u^N)^{\frac{1}{2N}+\frac{1}{2M}},}
note that $u^N = \til{u}^M$. Then using the fact that $D_N(u;q^2)$ commutes with $D_M(\til{u};\til{q}^2)$, we have
\begin{Thm} \label{Main_2}
The function $\newg_{1,N,M}(u)$ satisfies the same quantum dilogarithm identities as $\newg_{1,N}(u)$,
namely we have \eqref{cyclic_g_id} 
with $\newg_{1,N}(u)$ replaced by $\newg_{1,N,M}(u)$.
\end{Thm}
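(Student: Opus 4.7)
The plan is to extend the proof of Theorem \ref{g1_identity} to general coprime $(M,N)$: start from the tropical quantum dilogarithm identity \eqref{gb_id_tropical} for $g_b$ at $b = \sqrt{M/N}$, factor each $g_b(Y_{\epsilon_t c_t})$ into cyclic dilogarithm pieces and a $\newg_{1,N,M}$ piece, and then shuffle all $\newg_{1,N,M}$ factors to one end by a commutation lemma. The leftover cyclic dilogarithm product should vanish thanks to Theorem \ref{cyclic_dilog_id} together with its modular dual counterpart, leaving the desired tropical identity \eqref{cyclic_g_id} for $\newg_{1,N,M}$.

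\textbf{Key steps.} First, I would combine Proposition \ref{Phi_cyclic} with a short computation, parallel to the derivation of \eqref{g_root_expression}, to obtain a factorization of the form
\[
g_b(u) \;=\; d_N(u)\, \til{d}_M(\til{u})\, \newg_{1,N,M}(u),
\]
where $\til{d}_M(\til{u})$ is the order-$M$ cyclic dilogarithm in the modular dual variable, arising from the $D_M(\cdot;\til{q}^{-2})$ factor of Proposition \ref{Phi_cyclic} in direct analogy with how $d_N(u)$ arises from $D_N(\cdot;q^2)$. Second, I would verify the commutation relation
\[
\newg_{1,N,M}(Y_i)\, Y_j\, \newg_{1,N,M}(Y_i)^{-1} \;=\; Y_j\bigl(1+e^{\pi i(M+N)}Y_i^N\bigr)^{-b_{ji}/N},
\]
by replaying the lemma in the proof of Theorem \ref{g1_identity}. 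Since $q^{2Nb_{ji}} = e^{2\pi iMb_{ji}} = 1$ as a scalar, $Y_i^N$ commutes numerically with $Y_j$, so the $\Li_2$ piece and the explicit prefactor of $\newg_{1,N,M}(u)$ are central with respect to $Y_j$; the only nontrivial contribution comes from the operator branch shift $Y_j \log(Y_i^N) = (\log(Y_i^N)+2\pi iMb_{ji})\,Y_j$ inside the $g_1(\cdot)^{1/(MN)}$ piece, which produces an exponent $Mb_{ji}/(MN) = b_{ji}/N$ that exactly matches the order-$N$ cyclic $\cY$-mutation rule. By modular duality $g_b(u) = g_{b^{-1}}(\til{u})$ together with $Y_i^N = \til{Y}_i^M$, the same $\newg_{1,N,M}(Y_i)$ simultaneously satisfies the analogous relation with $\til{Y}_j$ in place of $Y_j$, yielding exponent $b_{ji}/M$ -- precisely the order-$M$ cyclic mutation for the dual variables $\til{Y}_i$.

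\textbf{Assembly and main obstacle.} Plugging the factorization into $g_b(Y_{\epsilon_L c_L})^{\epsilon_L}\cdots g_b(Y_{\epsilon_1 c_1})^{\epsilon_1}=1$ and shuffling all $\newg_{1,N,M}$ factors to the left using the commutation relations above, the $d_N$-arguments rearrange into the cyclic $y$-variables $\cY_{k_t}(t)^{\epsilon_t}$, so their product is trivial by Theorem \ref{cyclic_dilog_id}. The $\til{d}_M$-factors collapse similarly by the modular dual version of Theorem \ref{cyclic_dilog_id}, obtained by symmetrically swapping $(M,N,q,Y) \leftrightarrow (N,M,\til{q}^{-1},\til{Y})$. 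What survives is exactly the desired tropical identity \eqref{cyclic_g_id} with $\newg_{1,N}$ replaced by $\newg_{1,N,M}$. The principal obstacle is that a \emph{single} conjugation by $\newg_{1,N,M}(Y_i)$ must drive \emph{both} the order-$N$ cyclic mutation on the $\cY$-side and the order-$M$ cyclic mutation on the dual side \emph{simultaneously}, so that the two products $\prod d_N(\cdots)$ and $\prod \til{d}_M(\cdots)$ collapse coherently within a single shuffling; this dual consistency is precisely what the modular duality of $g_b$ (inherited by $\newg_{1,N,M}$) provides, and is the reason the general $(M,N)$ statement follows from the cyclic identities already established.
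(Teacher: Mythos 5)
Your proposal is correct and follows essentially the same route the paper intends: factorize $g_{\sqrt{M/N}}$ via Proposition \ref{Phi_cyclic} into $d_N$, a dual order-$M$ cyclic dilogarithm, and $\newg_{1,N,M}$, use modular duality ($Y_i^N=\til{Y}_i^M$) to get the simultaneous order-$N$ and order-$M$ shuffle relations, and collapse the two cyclic products by Theorem \ref{cyclic_dilog_id} and its dual, using that $D_N(u;q^2)$ commutes with $D_M(\til{u};\til{q}^{2})$. The paper states this only as a terse remark before the theorem, so your write-up supplies exactly the details it omits.
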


\appendix

\section{Proof of \eqref{Phi_M1}}

In this subsection we present a new proof of \eqref{Phi_M1}, and as a consequence another proof for Theorem \ref{g1_identity}. Similar analysis can be used to prove Proposition \ref{Phi_cyclic} in its full generality for $q=e^{\pi i\frac{M}{N}}$ but we will omit it for simplicity.
\begin{proof}[Proof of \eqref{Phi_M1}]
By definition, we have for Im$(b^2)>0$
\Eqn{
g_b(x) &= \frac{(-qx;q^2)_\infty}{(-\til{q}^{-1} \til{x};\til{q}^{-2})_\infty},
}
or under rescaling
\Eqn{
g_b(e^{\pi i}q\inv x) =\frac{(x;q^2)_\infty}{(\til{x};\til{q}^{-2})_\infty}.
}
Again, note that we need to keep track of $-1=e^{\pm \pi i}$ since $g_b(x)$ involves $\log(x)$.

Now recall from \eqref{Rstar} we have
\begin{align}
\begin{split}
(x;q)_\infty &= (1-x^N)^{\frac{1-N}{2N}}R^*_{\tau}(x^N)d^*_N(x)(1+\cO(\tau))\\
&=\frac{R^*_{\tau}(x^N)}{D_N(x;q)}(1+\cO(\tau)),
\label{Poch_RD}
\end{split}
\end{align}
where $R^*_{\tau}(x) := (1-x)^{\frac{1}{2}}e^{-\Li_2(x)/\tau}$.

Let $q=e^{-\tau/2N^2}\zeta = e^{-\tau/2N^2}e^{\pi i/N}$. If we write $q=e^{\pi ib^2}$, then
$b^2 = \frac{2\pi N+i\tau}{2\pi N^2}$ and 
$$\til{q}\inv = e^{-\pi ib^{-2}} = e^{-\frac{2\pi^2N^2i}{2\pi N+i\tau}} = e^{-\pi iN-\tau/2 + i\tau^2/4\pi N+\cO(\tau^3)}.
$$
Let us now derive the asymptotics of $(x;\til{q}^{-2})_\infty$. We apply Euler-Maclaurin formula to its logarithm, following the similar idea from \cite{BazhanovR}. We obtain
\Eqn{(x;\til{q}^{-2})_\infty &= (1-x)^{\frac{1}{2}} e^\frac{\Li_2(x)}{\log(\til{q}^{-2})}\\
&=(1-x)^{\frac{1}{2}} e^{\frac{\Li_2(x)}{-\tau+\frac{i\tau^2}{2\pi N}+\cO(\tau^3)}}\\
&=(1-x)^{\frac{1}{2}} e^{\frac{\Li_2(x)}{-\tau}(1+\frac{\tau}{2\pi iN})} (1+\cO(\tau))\\
&=R^*_{\tau}(x)e^{-\frac{1}{2\pi iN}\Li_2(x)}(1+\cO(\tau)),
}
and hence
\Eq{g_\frac{1}{\sqrt{N}}(e^{\pi i}q^{-1}x) &= \lim_{\tau\to 0}\frac{(x;q^2)_\infty}{(\til{x};\til{q}^{-2})_\infty}\nonumber\\
&=\left.\frac{R^*_{\tau}(x^N)}{R^*_{\tau}(\til{x})}\right|_{\tau\to 0}\frac{e^{-\frac{1}{2\pi iN}\Li_2(\til{x})}}{D_N(x;q^2)} (1+\cO(\tau)).\label{g1N_ratio}
}
The limit
\Eqn{&\lim_{\tau\to 0}\frac{R^*_{\tau}(x^N)}{R^*_{\tau}(\til{x})}=\lim_{\tau\to 0}e^{-\Li_2(x^N)/\tau+\Li_2(\til{x})/\tau}
}
can be computed as (see \eqref{diff_comp} for a similar computation)
\Eqn{\lim_{\tau\to 0}-\Li_2(x)/\tau+\Li_2(\til{x})/\tau &=\lim_{\tau\to 0}\frac{1}{\tau}\left(\int_0^{x^N}\frac{\log(1-t)}{t}dt-\int_0^{\til{x}}\frac{\log(1-t)}{t}dt\right)\\
&=\lim_{\tau\to 0}-\frac{\log(1-x(\tau))}{x(\tau)} \frac{d\til{x}}{d \tau}\\
&=-\frac{\log(1-x^N)\log(x)}{2\pi i},
}
where we used 
$\til{x} = x^{\frac{1}{b^2}} =x^{\frac{2\pi N^2}{2\pi N+i\tau}} = x^{N+\frac{\tau}{2\pi i}+\cO(\tau^2)}$.
We therefore obtain
\Eq{\left.g_\frac{1}{\sqrt{N}}(e^{\pi i}q^{-1}x)\right|_{b^2\to \frac{1}{N}}&=\frac{e^{\frac{i}{2\pi i N}\Li_2(x^N)}(1-x^N)^{-\frac{\log(x)}{2\pi i}}}{D_N(x;q^2)}\nonumber\\
&=\frac{g_1(x^N)^{\frac{1}{N}}}{D_N(x;q^2)}\label{g1_ratio}.}
Shifting $x\to e^{-2\pi i}x$ to the other side of the branch cut, we obtain
\Eqn{\left.g_\frac{1}{\sqrt{N}}(e^{-\pi i}q^{-1}x)\right|_{b^2\to \frac{1}{N}}=\frac{e^{\frac{i}{2\pi i N}\Li_2(x^N)}(1-x^N)^{1-\frac{\log(x)}{2\pi i}}}{D_N(x;q^2)},
}
or in terms of $x=e^{2\pi bz}$:
\Eqn{\Phi_{\frac{1}{\sqrt{N}}}\left(z-\frac{iQ}{2}\right) = \frac{e^{\frac{i}{2\pi i N}\Li_2(e^{2\pi Nbz})}(1-e^{2\pi Nbz})^{1+ibz}}{D_N(e^{2\pi bz};q^2)},
}
which is identical to \eqref{Phi_M1}.
\end{proof}

As a consequence of this analysis, we provide another proof of Theorem \ref{g1_identity}.

\begin{proof}[Proof of Theorem \ref{g1_identity}] Again let $q=e^{\pi ib^2} = e^{-\tau/2N^2}e^{\pi i/N}$.
From equation \eqref{Poch_RD}, \eqref{g1N_ratio} and \eqref{g1_ratio}, we conclude that
\Eqn{
\Phi_1(sz)^{\frac{1}{N}}=g_1(e^{2\pi\sqrt{N}z})^{\frac{1}{N}} = \lim_{\tau\to 0} \frac{R^*_{\tau}(x^N)}{(\til{x};\til{q}^{-2})_\oo},
}
where $x=e^{2\pi bz}$.
Now we substitute $z\mapsto z-\frac{iQ}{2}$ so that $\til{x}\mapsto -\til{q}\inv \til{x}$, then by definition we have 
$$x^N=e^{2\pi bNz} \mapsto -e^{\tau/2N}(-x)^N(1+\cO(\tau)),$$
hence 

\Eqn{
R^*_{\tau}(x^N) &\to R^*_{\tau}(-e^{\tau/2N}(-x)^N) \\
&= R^*_{\tau}(-e^{\tau(1+N)/2N}e^{-\tau/2}(-x)^N) \\
&= (1+(-x)^N)^{\frac{1+N}{2N}}R^*_{\tau}(-e^{-\tau/2}(-x)^N)(1+\cO(\tau))\\
&= (1+\til{q}x^N)^{\frac{1+N}{2N}}R_{\tau,N}(x)(1+\cO(\tau)),}
where $R_{\tau,N}(x)$ is defined in \eqref{RtauN}, and we have used the identity
$$R^*_{\tau}(e^{\a\tau}x) = (1-x)^\a R^*_{\tau}(x)(1+\cO(\tau)).$$
Hence we have
$$\Phi_1\left(s\left(z-\frac{iQ}{2}\right)\right)^{\frac{1}{N}} = \lim_{\tau\to 0} (1+\til{q}x^N)^{\frac{1+N}{2N}} R_{\tau,N}(x)\Psi_{\til{q}\inv}(\til{x}).$$
Now note that $\til{x}$ commute with $x^N$ (this does not involve the $\tau$ limit), hence we can move the $R_{\tau,N}$ to one side and $\Psi_{\til{q}^{-1}}(\til{x})$ to the other side.
Then since both $R_{\tau,N}(x)$ and $\Psi_{\til{q}\inv}(\til{x})$ satisfy the quantum dilogarithm identities, we conclude that
\Eqn{(1+\til{q}x^N)^{-\frac{1+N}{2N}}\Phi_1\left(s\left(z-\frac{iQ}{2}\right)\right)^{\frac{1}{N}} &=(1+\til{q}x^N)^{-\frac{1+N}{2N}} g_1(e^{-\pi i}\til{q}\inv x^N)^{\frac{1}{N}}\\
&=(1+\til{q}x^N)^{\frac{1+N}{2N}} g_1(e^{\pi i}\til{q}x^N)^{\frac{1}{N}}\\
&=\newg_{1,N}(x)
}
also satisfies the quantum dilogarithm identities.
\end{proof}

\bibliographystyle{amsplain_abbr}
\bibliography{root}

\end{document}